\documentclass[letterpaper, 11pt]{amsart}
\usepackage{mathtools}
\usepackage{amsmath}
\usepackage[T1]{fontenc}
\usepackage{amssymb}
\usepackage{yhmath}
\usepackage{comment}
\usepackage{graphicx} 
\usepackage{ mathrsfs }
\usepackage{bbm} 
\usepackage{xcolor}
\usepackage{tikz-cd}
\usepackage{tikz}
\usetikzlibrary{patterns}
\usepackage{hyperref}

\DeclareMathAlphabet{\mathpzc}{OT1}{pzc}{m}{it}

\usepackage{thmtools}
\usepackage{thm-restate}

\usepackage{caption}

\newtheorem{theorem}{Theorem}[section]

\newtheorem*{claim*}{Claim}

\newtheorem{lemma}[theorem]{Lemma}

\newtheorem{corollary}[theorem]{Corollary}

\newtheorem{proposition}[theorem]{Proposition}

\theoremstyle{definition}
\newtheorem{definition}[theorem]{Definition}

\newtheorem{remark}[theorem]{Remark}

\numberwithin{equation}{section}

\DeclareMathOperator{\supp}{supp}

\DeclareMathOperator{\dist}{d}

\DeclareMathOperator{\Bc}{\mathcal{B}}
\DeclareMathOperator{\Cc}{\mathcal{C}}

\DeclareMathOperator{\Kc}{\mathcal{K}}
\DeclareMathOperator{\Lc}{\mathcal{L}}
\renewcommand{\Mc}{\mathcal{M}}

\DeclareMathOperator{\Tc}{\mathcal{T}}

\DeclareMathOperator{\Pc}{\mathcal{P}}

\DeclareMathOperator{\Xc}{\mathcal{X}}
\newcommand{\Ec}{\mathcal{E}}

\DeclareMathOperator{\Hb}{\mathbb{H}}

\DeclareMathOperator{\Nb}{\mathbb{N}}

\DeclareMathOperator{\Rb}{\mathbb{R}}

\DeclareMathOperator{\PO}{\mathsf{PO}}

\newcommand{\abs}[1]{\left|#1\right|}
\newcommand{\norm}[1]{\left\|#1\right\|}

\newcommand{\Ga}{\Gamma}

\newcommand{\CAT}{\operatorname{CAT}}

\renewcommand{\PO}{\mathsf{PO}}

\begin{document}

\title{Extremal entropy for products~of~Fuchsian~representations}

\author{Richard Canary}
\address{University of Michigan}
\thanks{Canary was partially supported by grant DMS-2304636 from the National Science Foundation and a Fellowship form the Simons Foundation. 
This paper is based upon work supported by the National Science Foundation
under Grant No. DMS-2424139, while the authors were in residence at the Simons Laufer Mathematical Sciences Institute during the Spring 2026 semester.}

\author{Dongryul M. Kim}
\address{
  Institute for Advanced Study (Current)
  \newline \indent Simons Laufer Mathematical Sciences Institute}

\date{\today}
\keywords{}
\subjclass[2020]{57K20, 37D40}

\begin{abstract}
In this paper, we count the number of  (almost) extremally stretched closed geodesics for a pair of (non-conjugate) Fuchsian representations of a closed surface group, and show that it has subexponential growth. We then deduce that, as a discrete subgroup of $\mathsf{PO}(2, 1) \times \mathsf{PO}(2, 1)$, the growth indicator of the product representation vanishes on the boundary of the Benoist limit cone. We also prove that for a general Zariski dense Borel Anosov subgroup of $\mathsf{PO}(2, 1) \times \mathsf{PO}(2, 1)$, its growth indicator vanishes on at least one boundary component of the Benoist limit cone, but not necessarily on both. One may view our first result as a sharpening of Thurston's result that there is a unique geodesic lamination  $\lambda$ such that every measured lamination maximizing the ratio of lengths with respect to the two representations has support contained in $\lambda$. We hope this will be a starting point for a more general study of extremal entropy.

\end{abstract}

\maketitle

\section{Introduction}

Let $S$ be a closed connected orientable surface of genus at least two. Its Teichm\"uller space $\Tc(S)$ is the space of marked hyperbolic structures on $S$.
A marked hyperbolic structure on $S$ (up to isometry) is equivalent to a discrete faithful representation $\rho:\pi_1(S)\to \mathsf{PO}(2,1)$ (up to conjugacy). Given $[\rho]\in\Tc(S)$, 
one obtains a hyperbolic surface $X_\rho:=\mathbb H^2/\rho(\pi_1(S))$ and a homeomorphism $h_\rho:S\to X_\rho$  (well-defined up to homotopy) in the homotopy class of $\rho$, 
called a marking.
One may also regard $\Tc(S)$ as a connected component of the character variety 
$$\mathcal X(\pi_1(S), \mathsf{PO}(2,1)) := \mathrm{Hom}(\pi_1(S),\mathsf{PO}(2,1))/\mathsf{PO}(2,1).$$
For any $g\in \pi_1(S)$, let $\ell_{\rho}(g)$ be the translation length  of $\rho(g)$ on $\Hb^2$, which is also the length of the geodesic representative of the curve  on $X_\rho$ in
the free homotopy class determined by $g$.

In \cite{thurston1998minimal}, Thurston defined an asymmetric metric on $\Tc(S)$, now called Thurston's asymmetric metric, 
by considering \emph{maximal stretching} between two marked hyperbolic surfaces. More precisely, for $[\rho],[\sigma]\in \Tc(S)$, 
 Thurston defined the maximal stretching constant
\begin{equation} \label{eqn:max stretching}
L_{\rho\sigma} := \sup_{g \in \pi_1(S) - \{1\}} \frac{\ell_\sigma(g)}{\ell_\rho(g)}
\end{equation}
and showed $1 \le L_{\rho\sigma} < + \infty$, with the equality if and only if $[\rho]=[\sigma]$. Then
$$
\dist_{\rm Thurston}([\rho],[\sigma]) := \log L_{\rho\sigma}
$$
is Thurston's asymmetric metric on $\Tc(S)$. Thurston also showed that $L_{\rho\sigma}$ is the minimal Lipschitz constant of any homeomorphism from $X_\rho$ to $X_\sigma$
in the homotopy class of $h_\sigma\circ h_\rho^{-1}$,  which is the homotopy class respecting the markings. 
Notice that 
$$
\inf_{g \in \pi_1(S) - \{1\}} \frac{\ell_\sigma(g)}{\ell_\rho(g)} = \frac{1}{L_{\sigma\rho}}.
$$

Many authors, see Section \ref{history}, have studied the exponential growth rate
of the number of free homotopy classes of closed curves in $S$ whose length ratio $\frac{\ell_{\sigma}(\cdot)}{\ell_{\rho}(\cdot)}$ is close to  a fixed ratio $R$. 
More precisely, for $R \in \left[ \frac{1}{L_{\sigma \rho}}, L_{\rho \sigma} \right]$, we call the exponential growth rate
\begin{equation} \label{eqn:ratio entropy}
\Ec_{\rho \sigma}(R) := \lim_{\epsilon \to 0} \limsup_{T \to + \infty} \frac{\log \# \left\{ [g] \in [\pi_1(S)] : \abs{\frac{\ell_\sigma(g)}{\ell_\rho(g)}- R} < \epsilon, \  \ell_\rho(g) \le T \right\}}{T}
\end{equation}
the \emph{entropy along the ratio $R$}, where $[\pi_1(S)]$ is the collection of non-trivial conjugacy classes in $\pi_1(S)$. 
In particular, we call each of the entropies along the two extreme ratios
$$
\Ec_{\rho \sigma}(L_{\rho \sigma}) \quad \text{and} \quad \Ec_{\rho \sigma}(1/L_{\sigma \rho})
$$
\emph{extremal entropies}. We will later consider  notions of entropy and extremal entropy in a more general setting.

If $R$ is not extreme, i.e. $
R \in \left( \frac{1}{L_{\sigma\rho}}, \ L_{\rho\sigma} \right)
$, 
then it follows from works of Sambarino \cite[Corollary 4.4]{Sambarino_hyperconvex} and Chow--Fromm \cite[Corollary 7.8]{CF_joint} 
that $\Ec_{\rho \sigma}(R) > 0$, and that $\Ec_{\rho \sigma}(R)$ is given by 
the so-called growth indicators for higher-rank discrete subgroups introduced by Quint \cite{Quint_divergence}. We will discuss this more fully
in Section \ref{subsec:GI} below. The special case where $R = 1$ was studied earlier by Schwartz--Sharp~\cite{SS_correlation}.

Non-extremal entropies above were 
studied using  techniques based on higher-rank Patterson--Sullivan theory
\cite{Quint_PS,Sambarino_hyperconvex,Sambarino_report}.
 On the other hand, the higher-rank Patterson--Sullivan theory cannot be applied to the 
counting problem for extremal entopries, due to the lack of an appropriate 
Patterson--Sullivan measure. 

In this paper, we show that the 
growth rate of closed geodesics which are almost maximally (or minimally) stretched is subexponential for two marked closed hyperbolic surfaces, and 
hence that extremal entropies are always zero in this case. This completes the above counting problem by handling the  extreme cases.

\begin{theorem}[Zero extremal entropies] \label{thm:subexp}
If $[\rho]\ne[\sigma]\in \Tc(S)$, then both extremal entropies are zero:
$$
\Ec_{\rho \sigma}(L_{\rho \sigma}) = \Ec_{\rho \sigma}(1/L_{\sigma \rho}) = 0.
$$
In other words,
\begin{enumerate}
    \item almost maximally stretched closed geodesics have subexponential growth:
    $$
\lim_{\epsilon \to 0} \limsup_{T \to + \infty} \frac{\log \# \left\{ [g] \in [\pi_1(S)] : \frac{\ell_\sigma(g)}{\ell_\rho(g)} \ge L_{\rho\sigma} - \epsilon \text{ and } \ell_\rho(g) \le T \right\}}{T} = 0.
$$
\item almost minimally stretched closed geodesics have subexponential growth:
    $$
\lim_{\epsilon \to 0} \limsup_{T \to + \infty} \frac{\log \# \left\{ [g] \in [\pi_1(S)] : \frac{\ell_\sigma(g)}{\ell_\rho(g)} \le \frac{1}{L_{\sigma\rho}} + \epsilon \text{ and } \ell_\sigma(g) \le T \right\}}{T} = 0.
$$
\end{enumerate}
\end{theorem}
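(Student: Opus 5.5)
By the symmetry $\frac{\ell_\rho(g)}{\ell_\sigma(g)}\le \frac{1}{1/L_{\sigma\rho}}$, swapping the roles of $\rho$ and $\sigma$ turns (2) into (1). So it suffices to prove (1). The plan rests on Thurston's structure theory for maximal stretching. There is a unique chain-recurrent geodesic lamination $\lambda=\lambda(\rho,\sigma)$ on $X_\rho$, the maximally stretched lamination. Every optimal Lipschitz map $X_\rho\to X_\sigma$ in the correct homotopy class stretches the leaves of $\lambda$ by exactly $L_{\rho\sigma}$. Moreover, every geodesic current (in particular every measured lamination) realizing the supremum in \eqref{eqn:max stretching} is supported in $\lambda$.

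The first step is a compactness/approximation statement. For each $\delta>0$ there is $\epsilon>0$ such that any closed $\rho$-geodesic $\gamma$ with $\ell_\sigma(\gamma)/\ell_\rho(\gamma)\ge L_{\rho\sigma}-\epsilon$ spends all but a $\delta$-fraction of its length within distance $\delta$ of $\lambda$ in $T^1X_\rho$. I would argue by contradiction using geodesic currents. Normalize such $\gamma_n$ to $\gamma_n/\ell_\rho(\gamma_n)$ and pass to a limit current $\mu$. Continuity of the intersection-length functionals gives $\ell_\sigma(\mu)=L_{\rho\sigma}\ell_\rho(\mu)$, so $\supp\mu\subset\lambda$. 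By weak-$*$ convergence of the corresponding flow-invariant probability measures on $T^1X_\rho$, the time $\gamma_n$ spends outside the $\delta$-neighborhood of $\lambda$ is $o(\ell_\rho(\gamma_n))$.

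The second step is the counting, and it is the heart of the argument. A geodesic lamination on a closed hyperbolic surface has zero topological entropy: the geodesic flow restricted to $\lambda\subset T^1X_\rho$ has zero entropy, since $\lambda$ has measure zero and Hausdorff dimension one. More concretely, the number of distinct $\lambda$-orbit segments of length $T$ up to $\delta$-shadowing grows subexponentially (polynomially, via train-track carrying: a lamination is carried by a train track $\tau$, and words of length $T$ in $\tau$'s branches realized near $\lambda$ have polynomial count). A closed geodesic of length $T$ satisfying the conclusion of step one is coded as follows. It consists of at most $\delta T$ total time away from $\lambda$, split into at most $N\lesssim \delta T/c$ excursions (each excursion has length at least some $c>0$ once we use a fixed neighborhood scale; otherwise merge). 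Between excursions it runs in segments shadowing $\lambda$. To specify $\gamma$ up to a bounded-to-one ambiguity (closed geodesics are determined by sufficiently fine shadowing data, via the Anosov closing/expansivity property), we need the following data.

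\begin{itemize}
\item The positions of the excursions: $\binom{T}{N}\le e^{H(\delta)T}$ choices, where $H(\delta)\to0$.
\item The excursion paths: at most $e^{h_{top}\,\delta T}$ choices total, with $h_{top}=1$.
\item The $\lambda$-shadowing segments: $e^{o(T)}$ choices.
\end{itemize}

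Thus the count is at most $e^{(H(\delta)+\delta+o(1))T}$. Letting $\epsilon\to0$ (so $\delta\to0$) yields the zero limit in (1).

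The main obstacle I expect is making step two rigorous. This requires an honest subexponential bound on the number of separated orbit segments near $\lambda$: not only on $\lambda$, but in a $\delta$-neighborhood, uniformly in the time scale. I would handle this with a train track $\tau$ carrying $\lambda$ and a neighborhood $N(\tau)$ that contains all geodesics staying $\delta$-close to $\lambda$. The key fact is that bi-infinite geodesics carried by $\tau$ correspond to train paths. Their count of length-$T$ paths is subexponential, since train paths in a track carrying only laminations grow polynomially once $\tau$ is chosen tight enough that no closed curve carried by $\tau$ except those in $\lambda$ is near-maximally stretched. If this direct bound is troublesome, an alternative is to use the variational principle: an exponential count would, via a standard construction, produce an invariant measure of positive entropy supported (in the limit) on near-maximally stretched orbits, hence on $\lambda$. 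This contradicts zero entropy of the flow on $\lambda$, using upper semicontinuity of entropy for the expansive geodesic flow.
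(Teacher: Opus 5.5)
Your overall architecture is the paper's: near-maximally stretched geodesics concentrate on the stretch lamination, the geodesic flow on $\lambda$ has zero entropy, and a variational principle with upper semicontinuity connects the two. Your closing fallback is essentially the paper's proof. The paper forms the closed sets $\mathcal{K}_n$ of periodic measures $\mu_{[g]}$ with $\ell_\sigma(g)/\ell_\rho(g)\ge L_{\rho\sigma}-\frac1n$. It bounds the count in $[T-1,T]$ by $\sup_{\mathcal{K}_n}h$ using Kifer's large-deviation variational principle, which is exactly the ``standard construction'' you allude to. It then shows every accumulation point lies in $\mathcal{M}(\lambda)$, and concludes with upper semicontinuity of $h$ and Fathi's zero-entropy lemma.

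The route you present as primary, however, breaks at the step you call its heart. The claim that the $\lambda$-shadowing segments contribute $e^{o(T)}$ choices at a fixed scale $\delta$ is false whenever $\lambda$ has a minimal component that is not a closed leaf. A track fully carrying such a component is recurrent and has a switch. Let $c_1,c_2$ be closed train paths passing from the large branch at that switch into the two small branches; then every word in $c_1,c_2$ is a train path. So the number of length-$T$ train paths grows exponentially, and so does the number of carried closed geodesics, all of which stay $\delta$-close to $\lambda$. This contradicts your polynomial-growth claim. Your proposed tightness condition is also backwards. The ratio $\ell_\sigma/\ell_\rho$ extends continuously to $\mathcal{M}(\rho)$ and equals $L_{\rho\sigma}$ on every measure supported on $\lambda$, since leaves are stretched by exactly $L_{\rho\sigma}$. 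Hence \emph{every} closed curve carried by a sufficiently tight track is near-maximally stretched.

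What is true is only that the growth rate $\eta(\delta)$ of separated orbit segments staying in the $\delta$-neighborhood of $\lambda$ tends to $0$ as $\delta\to0$. Proving this requires exactly zero entropy of $\lambda$ plus upper semicontinuity, via a Misiurewicz/Kifer-type construction, i.e.\ your fallback. Once you have that, the excursion bookkeeping is unnecessary; it would in any case need care with per-segment constants $C_\delta^{N}$ and with the order in which scales are chosen. Applying the fallback directly to the near-maximally stretched closed orbits finishes the proof, after the easy passage from $[T-1,T]$ to $[0,T]$.

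Two smaller points:
\begin{enumerate}
\item In Step 1, Thurston's theorem concerns ratio-maximizing \emph{measured laminations}, whereas your limit is a priori an arbitrary current. You need the paper's argument with the Gu\'eritaud--Kassel map $f$ whose stretch locus is $\lambda$. If the limit charges $\pi^{-1}(V^\circ)$ for a compact $V\subset X_\rho-\lambda$ on which $f$ is locally $L_0$-Lipschitz with $L_0<L$, then $\ell_\sigma(g_n)\le (L(1-\delta)+L_0\delta)\,\ell_\rho(g_n)$ for large $n$, a contradiction.
\item ``Measure zero and Hausdorff dimension one'' is not by itself a proof that the flow on $\lambda$ has zero entropy; cite Fathi (or Birman--Series) for that.
\end{enumerate}
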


Note that in Theorem \ref{thm:subexp}, Item (2) follows from Item (1) by switching the roles of $\rho$ and $\sigma$.

\subsection{Comments on the proof}
The proof of Theorem \ref{thm:subexp} makes use of the strengthening of Thurston's results by Gu\'eritaud--Kassel \cite{GK_maximally} and 
a generalization of the Variational principle due to Kifer \cite{Kifer_large} (see also \cite{Pollicott_large}).

We hope that this paper  will be the  starting point  for a more general study of extremal entropy and  that our argument 
is flexible enough to be extended to a variety of other settings where analogues of Thurston's results hold.
We also expect that arguments presented in this paper can also be applied to several types of counting problems.

We finally note that while we present our argument in a very general setting, one can also translate it into  the language of geodesic currents on surfaces, 
if one is only concerned with closed hyperbolic surfaces. We give more details in  Remark \ref{currents proof}.

\subsection{A higher-rank viewpoint on Theorem \ref{thm:subexp}} \label{subsec:GI}
In the spirit of early works of Bishop--Steger \cite{bishop-steger} and Burger \cite{Burger_Manhattan}, 
we may re-interpret this result in terms of the product representation into 
$\mathsf{PO}(2,1)\times\mathsf{PO}(2,1)$, which is a rank-two semisimple Lie group.
If $[\rho], [\sigma] \in \Tc(S)$ are  distinct marked hyperbolic surfaces, then
the image
$$
\Gamma := (\rho\times\sigma) \left(\pi_1(S) \right) := \left\{ (\rho(g), \sigma(g)) : g \in \pi_1(S) \right\} \subset \mathsf{PO}(2,1)\times\mathsf{PO}(2,1)
$$
is a Zariski dense discrete subgroup.  In the language of Anosov representations,  $\Gamma$ is  a Borel Anosov subgroup of $\mathsf{PO}(2,1)\times\mathsf{PO}(2,1)$.
Then one considers the closed cone, 
called the \emph{Benoist limit cone},
\begin{equation} \label{eqn:limit cone}
\Bc_{\Ga} := \overline{\Rb_{> 0} \cdot \{ (\ell_{\rho}(g), \ell_{\sigma}(g))  \in \Rb^2 : g \in \pi_1(S) \}} \subset \Rb^{2}.
\end{equation}
which may naturally be identified with a closed cone in the positive Weyl chamber of 
$\mathsf{PO}(2,1)\times\mathsf{PO}(2,1)$. 
It was introduced by Benoist \cite{Benoist_properties} and plays a significant role in the study of 
discrete subgroups of Lie groups. Benoist further showed that if $\Gamma$ is Zariski dense, then $\Bc_{\Ga}$ is convex, and hence in our setting, it can be characterized as
\begin{equation} \label{eqn:character Benoist}
\Bc_{\Ga} = \left\{ (u_1, u_2) \in \Rb_{> 0}^2 : \frac{1}{L_{\sigma\rho}} \le \frac{u_2}{u_1} \le L_{\rho\sigma} \right\} \cup \{0\}.
\end{equation}
We also note that, by Thurston \cite{thurston1998minimal}, $\Bc_{\Gamma}$ has non-empty interior in this case, and this is extended to general Zariski dense discrete subgroups of  semisimple Lie groups by 
Benoist.

In order to study asymptotic properties of discrete subgroups, Quint \cite{Quint_divergence} introduced the notion of growth indicator. 
The \emph{growth indicator} of $\Ga$ in our setting is the function
\begin{equation} \label{eqn:GI}
  \psi_{\Ga} : \Rb^2 \to [0, +\infty) \cup \{-\infty\}
\end{equation}
defined as follows: fix a basepoint $o \in \Hb^{2}$ 
and consider the vector
$$\kappa(g) := \big(\dist_{\Hb^{2}}(o, \rho(g) o), \dist_{\Hb^{2}}(o, \sigma(g)( o))\big) \in \Rb^2$$ 
for each $g\in \pi_1(S)$, which represents displacements on each component. (This 
agrees with the Cartan projection with respect to an appropriate choice of Cartan decomposition of $\PO(2, 1) \times \PO(2, 1)$.)
Then for $u \in \Rb^2$,
$$
\psi_{\Ga}(u) := \norm{u} \cdot \inf_{\Cc \ni u} \limsup_{T \to + \infty} \frac{\log \# \{ g \in \pi_1(S) : \kappa(g) \in \Cc, \ T - 1 \le \norm{\kappa(g)} \le T\}}{T},
$$
where the infimum is over all open cone $\Cc \subset \Rb^2$ containing $u$ and with the convention $0 \cdot (\pm \infty) = 0$. 
This definition does not depend on the choice of $o \in \Hb^2$ and of the norm $\norm{ \cdot}$ on $\Rb^2$.
One can also see that the growth indicator is positively homogeneous of degree one.  

The growth indicator is a higher-rank generalization of the critical exponent in rank one, since 
one may think of $\psi_{\Ga}(u)$ as the exponential growth rate  of $\Gamma$ in the direction $u$. 
Quint \cite{Quint_divergence} proved that
$$
\Bc_{\Ga} = \{ u \in \Rb^2 : \psi_{\Ga}(u) \ge 0\} \quad \text{and} \quad 
\psi_{\Ga}(u) > 0 \quad \text{for all } u \in \Bc_{\Ga}^{\circ}
$$
where $\Bc_{\Ga}^{\circ}$ is the interior of $\Bc_{\Ga}$. 
 Moreover, he showed that $\psi_{\Ga}(u) = -\infty$ for $u \notin \Bc_{\Gamma}$ and  $\psi_{\Ga}$ is concave and upper semicontinuous
on $\Bc_\Gamma$.

However, very little is known about the  value of the growth indicator on the boundary $\partial \Bc_{\Ga}$.
In this language, Theorem \ref{thm:subexp} implies that the growth indicator in fact vanishes on the boundary when $[\rho]\ne[\sigma]\in\Tc(S)$.

\begin{corollary}[Vanishing of the growth indicator on the boundary] \label{cor:GI}
If $[\rho]\ne[\sigma]\in\Tc(S)$ and 
$\Ga := (\rho \times \sigma)\big(\pi_1(S)\big) \subset \mathsf{PO}(2,1)\times\mathsf{PO}(2,1)$, then 
    $$
    \psi_{\Ga} = 0 \quad \text{on } \partial \Bc_{\Ga}.
    $$
\end{corollary}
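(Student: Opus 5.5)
Since $\psi_\Ga$ is positively homogeneous of degree one and $\psi_\Ga(u)\ge 0$ on $\Bc_\Ga$ (Quint), it suffices to show $\psi_\Ga(u)\le 0$ for the two unit boundary rays. By \eqref{eqn:character Benoist}, these are the directions $u_+=(1,L_{\rho\sigma})$ and $u_-=(1,1/L_{\sigma\rho})$. By symmetry (swapping $\rho$ and $\sigma$, which interchanges the two boundary rays), we treat only $u_+$ and use item (1) of Theorem \ref{thm:subexp}. The main difficulty is that $\psi_\Ga$ counts \emph{group elements} by Cartan projection $\kappa(g)$, while Theorem \ref{thm:subexp} counts \emph{conjugacy classes} by translation lengths. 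We therefore need to pass from $\kappa$ to $(\ell_\rho,\ell_\sigma)$ and from elements to conjugacy classes with only subexponential loss.

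\textbf{Step 1: reduce to cyclically reduced representatives.} Fix a finite generating set and $o\in\Hb^2$. Since $\rho,\sigma$ are cocompact, there is a constant $C$ with the following property: every $g\in\pi_1(S)$ can be written as $g=hg'h^{-1}$ with $g'$ "cyclically reduced", meaning $|\ell_\rho(g')-\dist(o,\rho(g')o)|\le C$ and the same for $\sigma$. Moreover,
\[
\dist(o,\rho(g)o)=\ell_\rho(g')+2\dist(o,\rho(h)o)+O(1),
\]
and likewise for $\sigma$ with the same $h$. This is a standard fact for Gromov hyperbolic groups acting geometrically (a quasi-axis argument), applied to $\rho$. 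For $\sigma$ it follows because $\sigma\circ\rho^{-1}$ is a quasi-isometry of $\Hb^2$, and hyperbolicity of $\Hb^2$ lets the decomposition transfer with a uniform constant.

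\textbf{Step 2: the conjugating part is short.} Suppose $\kappa(g)\in\Cc$, where $\Cc$ is a small cone around $u_+$, so that $\kappa_2(g)\ge (L_{\rho\sigma}-\epsilon)\kappa_1(g)$. Write $a=\dist(o,\rho(h)o)$ and $b=\dist(o,\sigma(h)o)$. Using $\ell_\sigma(g')\le L_{\rho\sigma}\ell_\rho(g')$ and $b\le a\,L_{\rho\sigma}+O(1)$ (Thurston's Lipschitz map, applied to displacements), we get an upper bound on $\kappa_2(g)$. Combining it with the cone condition forces $\ell_\sigma(g')\ge(L_{\rho\sigma}-\epsilon')\ell_\rho(g')$ up to bounded error, provided $\ell_\rho(g')$ is not tiny compared to $T$. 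The remaining case, where $g'$ is short, requires the conjugator $h$ itself to be almost maximally stretched in displacement. I expect this to be the main obstacle. In this case I would argue with the $g'$-part absorbed, or more cleanly apply the same subexponential bound to almost maximally stretched geodesic segments, which are controlled by the same Guéritaud--Kassel lamination argument.

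\textbf{Step 3: count.} The number of $g$ with $\kappa(g)\in\Cc$ and $\|\kappa(g)\|\le T$ is at most
\[
\sum_{[g']}\#\{h:\dist(o,\rho(h)o)\le \tfrac12(T-\ell_\rho(g'))+O(1)\}\cdot O(\ell_\rho(g')),
\]
where the factor $O(\ell_\rho(g'))$ accounts for the choice of representative of $[g']$. The classes $[g']$ range over those counted in Theorem \ref{thm:subexp}(1) with parameter $\epsilon'$, and the $h$ are themselves almost maximally stretched, as in Step 2. By Theorem \ref{thm:subexp} the first count is $e^{o(T)}$ as $\epsilon'\to0$, and the same holds for the conjugators. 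Hence the total is $e^{\delta(\Cc)T}$ with $\delta(\Cc)\to0$ as $\Cc$ shrinks. Taking the infimum over cones gives $\psi_\Ga(u_+)\le0$, hence $\psi_\Ga(u_+)=0$. Homogeneity and the symmetric argument for $u_-$ finish the proof.
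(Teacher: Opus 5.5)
Your outer structure matches the paper's. Homogeneity and the $\rho\leftrightarrow\sigma$ symmetry reduce everything to $u_+=(1,L_{\rho\sigma})$, Quint gives $\psi_\Ga(u_+)\ge 0$, and the upper bound should come from Theorem~\ref{thm:subexp}(1). Your Steps~1 and~2 (the decomposition and the deficit inequality) are fine.

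The paper passes from the Cartan count defining $\psi_\Ga$ to conjugacy classes by citing Chow--Oh \cite[Theorem 8.1]{chow2023jordan}. You try to do this by hand, and that is where the gap is. In Step~3 the factor counting conjugators is of order $e^{(T-\ell_\rho(g'))/2}$ unless you restrict to $h$ with $\dist(o,\sigma(h)o)\ge (L_{\rho\sigma}-\epsilon')\dist(o,\rho(h)o)-O(1)$. You then say ``the same holds for the conjugators'' by Theorem~\ref{thm:subexp}, but that does not follow. Theorem~\ref{thm:subexp} counts conjugacy classes by translation length. Counting group elements $h$ with $\kappa(h)$ in a thin cone about $u_+$ is precisely the quantity $\psi_\Ga(u_+)$ you are trying to bound. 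So as written the argument is circular. Your fallback (subexponential growth of almost maximally stretched segments via the lamination) would need a Kifer-type large deviation bound for orbit points rather than closed orbits, and neither you nor the paper provides one.

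Either of two ideas closes the gap. The standard one drops the conjugation. There are a finite set $F\subset\pi_1(S)$ and $C>0$ such that for every $g$ some $s\in F$ puts the axis of $\rho(gs)$ within $C$ of $o$. Since the orbit maps of $\rho$ and $\sigma$ are quasi-isometric, the Morse lemma gives the same for $\sigma(gs)$. Hence $\kappa(g)$ is within $C'$ of $(\ell_\rho(gs),\ell_\sigma(gs))$, and $g\mapsto (s,[gs])$ has fibres of size $O(T)$. So the Cartan count in a cone about $u_+$ is at most $O(T)$ times the count of Theorem~\ref{thm:subexp}(1) in a slightly larger cone.

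Alternatively, your decomposition can be bootstrapped, because the conjugator lives at half the scale. Write $\kappa=(\kappa_1,\kappa_2)$ and set $D(g):=L_{\rho\sigma}\kappa_1(g)-\kappa_2(g)\ge -O(1)$. Then $D(g)=\big(L_{\rho\sigma}\ell_\rho(g')-\ell_\sigma(g')\big)+2D(h)+O(1)$ and $\dist(o,\rho(h)o)\le (T+O(1))/2$. Let $m(\epsilon)$ be the exponential growth rate of $\#\{g:\kappa_1(g)\le T,\ D(g)\le \epsilon T\}$. Splitting the $g'$-count according to whether $\ell_\rho(g')\ge\sqrt{\epsilon}\,T$ gives $m(\epsilon)\le \tfrac12 m(2\epsilon)+\max\{\sqrt{\epsilon},E(2\sqrt{\epsilon})\}$. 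Here $E(\eta)$ is the $\limsup_T$ in Theorem~\ref{thm:subexp}(1) at parameter $\eta$, so $E(\eta)\to 0$. Since $m\le 1$, iterating gives $m(\epsilon)\to 0$, and $\psi_\Ga(u_+)$ is bounded by a constant times $m(\epsilon)$. Without one of these two ingredients, Step~3 does not go through.
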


In \cite{chow2023jordan}, Chow--Oh proved that the growth indicators defined in terms of the Cartan and Jordan projections coincide on the interior $\Bc_{\Gamma}^{\circ}$, 
for a general Zariski dense Borel Anosov subgroup of a semisimple Lie group. Our Theorem \ref{thm:subexp} and Corollary \ref{cor:GI} imply that, in our setting, 
they coincide even on the boundary $\partial \Bc_{\Ga}$, and hence on the whole space $\Rb^2$.

\subsection{General Borel Anosov subgroups of $\PO(2, 1) \times \PO(2, 1)$}

Indeed, any 
 non-elementary 
Borel Anosov subgroup $\Gamma \subset \PO(2, 1) \times \PO(2, 1)$ is virtually the image of product representation $(\rho \times \sigma)(G)$, where $G$ is 
either a closed surface group or a free group, and $\rho, \sigma : G \to \PO(2, 1)$ are non-elementary convex cocompact representations.  
If $\rho$ and $\sigma$ are not conjugate, then $\Gamma$ is Zariski dense.
Hence, the Benoist limit cone $\Bc_{\Gamma}$ and the growth indicator $\psi_{\Gamma}$ of $\Gamma$
 can also be defined using
 Equations \eqref{eqn:limit cone} 
and \eqref{eqn:GI} in this more general setting.

As a consequence of our study of extremal entropy, we also observe that the growth indicator must vanish on at least one boundary component of the Benoist limit cone. We denote by 
$$\partial_{\max}, \ \partial_{\min} \subset \partial \Bc_{\Gamma} - \{0\}$$ 
the components of $\partial \Bc_{\Gamma} - \{0\} \subset \Rb^2$ with maximal and minimal slopes, respectively. See Figure \ref{fig:free examples} below. Notice that as in Equation \eqref{eqn:character Benoist}, the slopes of $\partial_{\max}$ and $\partial_{\min}$ are $L_{\rho \sigma}$ and $1 / L_{\sigma \rho}$, respectively.

\begin{theorem}[Necessary vanishing] \label{thm:necessary vanishing}
Let  $\Gamma \subset \PO(2, 1) \times \PO(2, 1)$ be a Zariski dense
Borel Anosov subgroup. Then $\psi_{\Gamma} = 0$ on at least one component of $\partial \Bc_{\Gamma}$. 

More precisely:
\begin{enumerate}
  \item If the slope of $\partial_{\max}$ is strictly bigger than $1$, then 
  $$
  \psi_{\Gamma}= 0 \quad \text{on } \partial_{\max}.
  $$
  \item If the slope of $\partial_{\min}$ is strictly smaller than $1$, then 
  $$
  \psi_{\Gamma}= 0 \quad \text{on } \partial_{\min}.
  $$
\end{enumerate}
\end{theorem}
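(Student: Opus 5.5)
Item (2) follows from item (1) by exchanging $\rho$ and $\sigma$, so the plan concentrates on (1). Write $\Gamma=(\rho\times\sigma)(G)$, up to finite index, with $G$ a closed surface group or a free group and $\rho,\sigma$ non-elementary convex cocompact. Assume $L:=L_{\rho\sigma}>1$. We must show $\psi_\Gamma(1,L)=0$. We already know $\psi_\Gamma(1,L)\ge 0$ by Quint, since $(1,L)\in\Bc_\Gamma$, so only the upper bound needs proof.

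The first step is to pass from Cartan projections to Jordan projections, i.e.\ from the growth indicator to the entropy along the extreme ratio, $\Ec_{\rho\sigma}(L)$. Since $\rho,\sigma$ are convex cocompact, $\kappa(g)$ and $(\ell_\rho(g),\ell_\sigma(g))$ differ by a uniformly bounded amount for cyclically reduced words (after conjugating into a fixed compact fundamental domain around the convex core). A conjugacy class of $\rho$-length at most $T$ contains at most $O(T)$ such representatives. So the count of $g$ with $\kappa(g)$ in a small cone around $(1,L)$ is at most polynomially larger than the number of classes $[g]$ with $\ell_\sigma(g)/\ell_\rho(g)\ge L-\epsilon$ and $\ell_\rho(g)\le T$. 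It therefore suffices to prove that this latter count grows subexponentially, which is item (1) of Theorem~\ref{thm:subexp} generalized to convex cocompact $\rho,\sigma$.

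The second step is the thermodynamic reduction. Code the geodesic flow on the $\rho$-convex core by a suspension over a subshift of finite type, and let $f$ be the reparametrization function with periods $\ell_\sigma$. By Kifer's large-deviation variational principle, the $\limsup$ above, in the limit $\epsilon\to0$, is bounded by
\[\sup\{h_\mu : \mu \text{ flow invariant probability}, \ \textstyle\int f\,d\mu = L\}.\]
(Periodic orbit measures $\delta_{[g]}/\ell_\rho(g)$ accumulate only on measures with $\int f\,d\mu \ge L$, and $L$ is the maximum of $\int f$.) So we must show that every $f$-maximizing invariant measure has zero entropy.

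The third step, and the main obstacle, is this zero-entropy claim. Here I would invoke Guéritaud--Kassel: if $L>1$, there is a nonempty geodesic lamination $\lambda$ (the maximally stretched lamination) such that every maximizing measure is supported on the geodesics tangent to $\lambda$, and $L$-Lipschitz maps exist that are optimal exactly along $\lambda$. A geodesic lamination in a hyperbolic surface, or in the convex core of a convex cocompact one, has measure zero and Hausdorff dimension one. Consequently the geodesic flow restricted to its unit tangent lift has zero topological entropy: the number of $\delta$-separated orbit segments of length $T$ staying near $\lambda$ grows subexponentially, by the standard polynomial bound on the number of arcs in a train-track neighborhood. Variational principle then gives $h_\mu=0$ for every such $\mu$. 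The hypothesis slope $>1$ is needed precisely so that Guéritaud--Kassel applies: for non-compact convex cocompact surfaces their lamination statement requires $L>1$, whereas if $L\le 1$ the maximally stretched set can be large and nothing forces zero entropy. Verifying that the Guéritaud--Kassel result covers the convex cocompact, possibly non-closed, case with $L>1$, and that the coding accommodates orbits confined to the convex core, is the delicate point of the argument.
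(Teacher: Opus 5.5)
Your overall architecture matches the paper's: Quint's bound $\psi_\Gamma(1,L_{\rho\sigma})\ge 0$, a comparison of the Cartan growth indicator with the extremal entropy $\Ec_{\rho\sigma}(L_{\rho\sigma})$, Kifer's variational bound, the Gu\'eritaud--Kassel stretch lamination, and zero entropy of the geodesic flow on a lamination (Fathi). The genuine gap is in your first step. The growth indicator counts \emph{all} $g\in G$ with $\kappa(g)$ in a cone. Your argument only controls elements whose $\rho$- and $\sigma$-axes pass within bounded distance of $o$. For those, $\kappa(g)$ is close to $(\ell_\rho(g),\ell_\sigma(g))$, and the bound of $O(T)$ representatives per class is fine.

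A general element $g=hkh^{-1}$ has, roughly, $\kappa(g)\approx 2\kappa(h)+(\ell_\rho(k),\ell_\sigma(k))$. So $\kappa(g)$ can point along $(1,L_{\rho\sigma})$ (take $\kappa(h)$ long and nearly in that direction), while $(\ell_\rho(g),\ell_\sigma(g))=(\ell_\rho(k),\ell_\sigma(k))$ points anywhere in $\Bc_\Gamma$, even along $\partial_{\min}$. The classes of such $g$ need not be extremal at all. Bounding these elements directly is circular, since it requires counting the $h$'s, i.e. $\psi_\Gamma$ itself. So the claim that the number of $g$ with $\kappa(g)$ in a small cone is at most polynomially larger than the number of extremal classes does not follow from what you wrote.

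The missing idea is the standard finite-perturbation lemma. There are a finite set $F\subset G$ and $C>0$ such that for every $g\in G$ some $f\in F$ makes $gf$ ``straight'': $1$ lies uniformly close to a quasi-axis of $gf$ in $G$. This follows from ping-pong in the hyperbolic group, or from the Abels--Margulis--Soifer/Benoist lemma. Both orbit maps are quasi-isometric embeddings, so the Morse lemma puts the axes of $\rho(gf)$ and $\sigma(gf)$ within bounded distance of $o$. Hence $\|\kappa(g)-(\ell_\rho(gf),\ell_\sigma(gf))\|\le C'$. Since $g\mapsto gf$ is at most $|F|$-to-one, your representative count applied to $gf$ gives the comparison. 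The paper sidesteps all of this by citing Chow--Oh \cite[Theorem 8.1]{chow2023jordan} for $\psi_\Gamma(1,L_{\rho\sigma})\le\Ec_{\rho\sigma}(L_{\rho\sigma})$.

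There are also some smaller omissions:
\begin{enumerate}
\item You never justify the first sentence of the theorem. Non-conjugacy plus marked length spectrum rigidity in $\PO(2,1)$ gives $\ell_\rho\ne\ell_\sigma$, hence $L_{\rho\sigma}>1$ or $L_{\sigma\rho}>1$.
\item Gu\'eritaud--Kassel only provide an $L$-Lipschitz map with stretch locus $\lambda$. That maximizing measures live on $T^1\lambda$ needs the paper's short argument. Off $\lambda$ the map is locally $L_0$-Lipschitz with $L_0<L$. So if a measure charges a compact set away from $\lambda$, its approximating periodic orbits have stretch ratio bounded away from $L$.
\item In the zero-entropy step, the polynomial count must be of leaf segments of $\lambda$, since disjoint leaves are ordered along transversals. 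Train paths, or orbit segments merely staying near $\lambda$, can grow exponentially.
\end{enumerate}
The symbolic coding is harmless but unnecessary. Kifer's bound applies directly to closures of sets of periodic-orbit measures, and upper semicontinuity of entropy lets $\epsilon\to 0$.
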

See Corollary \ref{cor:necessary vanishing general} for a more general version of this statement.

\begin{figure}[ht]
\begin{tikzpicture}[>= stealth, scale=1]

\filldraw[draw= white, fill = gray!50, opacity=0.5] (3, 3) -- (0, 0) -- (3, 1) -- (3, 3) -- (3, 3);

\draw[->, thick] (-1, 0) -- (3, 0);
\draw[->, thick] (0, -1) -- (0, 3);

\draw[red, thick] (0, 0) -- (3, 1);

\draw[red] (3, 1) node[right] {$\partial_{\min}$};

\draw[red] (1.6, 0.3) node[right] {$\psi_{\Gamma} = 0$};

\draw[blue, thick] (0, 0) -- (3, 3);

\draw[blue] (3, 3) node[right] {$\partial_{\max}$};

\draw[blue] (1.5, 2) node[left] {$\psi_{\Gamma} > 0$};

\draw (2.5, 1.5) node {\small $\Bc_{\Gamma}$};

\end{tikzpicture}

\caption{Theorem \ref{thm:necessary vanishing}(2) and Proposition \ref{free examples}} \label{fig:free examples}
\end{figure}

We also observe that the strict inequality between slopes is necessary to obtain the vanishing in Theorem \ref{thm:necessary vanishing}. Indeed, 
given any real number in $(0, 1)$, we construct a product of convex cocompact representations 
whose extremal entropy realizes the prescribed number. 
Let $\Sigma_{g, b}$ denote the compact connected orientable surface of genus $g$ with $b$ boundary components.
We say that  a convex cocompact representation $\rho: \pi_1(\Sigma_{g, b}) \to \PO(2, 1)$ uniformizes $\Sigma_{g,b}$ if there exists a homeomorphism
$h:\Sigma_{g,b}^\circ\to X_\rho$ in the homotopy class determined by $\rho$, where $\Sigma_{g,b}^\circ$ is the interior of $\Sigma_{g,b}$.

\begin{proposition}[Arbitrary positive values]
\label{free examples} 
Suppose $2g + b > 3$  and \hbox{$b>0$}. Then for any $\delta \in (0, 1)$, there exist non-conjugate convex cocompact representations $\rho, \sigma : \pi_1(\Sigma_{g, b}) \to \PO(2, 1)$ 
which uniformize $\Sigma_{g,b}$ so that: 
\begin{enumerate}
  \item $L_{\rho \sigma} = 1$.
  \item $\Ec_{\rho \sigma}(1) = \delta$.
  \item If $\Ga := (\rho \times \sigma)(\pi_1(\Sigma_{g, b}))$, then $\partial_{\max} = \{(t, t) : t > 0 \}$ has slope 1 and  
  $$\psi_{\Gamma}(t, t) = \delta \cdot t \quad \text{for } t > 0.$$
\end{enumerate}
\end{proposition}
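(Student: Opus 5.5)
We will build $\rho$ and $\sigma$ so that $\sigma$ is obtained from $\rho$ by \emph{shrinking} a subsurface and leaving its complement isometric. Then $\ell_\sigma\le\ell_\rho$ everywhere, with equality exactly on the curves that avoid the shrunken part.

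\textbf{The construction.} Since $2g+b>3$ and $b>0$, we can cut $\Sigma_{g,b}$ along a single essential simple closed curve or arc system into two pieces: a proper essential subsurface $Y$, with free fundamental group $\pi_1(Y)\subset \pi_1(\Sigma_{g,b})$ of rank at least $2$, and the rest, which contains at least one boundary component of $\Sigma_{g,b}$. A natural choice is to take $Y$ to be $\Sigma_{g,b}$ minus a pair of pants containing a boundary component, or the complement of a collar. We fix a convex cocompact uniformization $\rho_0$ of $\Sigma_{g,b}$ in which the convex core of $Y$ is embedded with geodesic boundary. Next we choose $\sigma$ by keeping the hyperbolic structure on $Y$ fixed and strictly shrinking the rest. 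For example, we shorten the boundary geodesic(s) of $\Sigma_{g,b}$ outside $Y$ via a strip-deflation (Thurston's construction for surfaces with boundary). This yields a $1$-Lipschitz map $X_\rho\to X_\sigma$ that is an isometry on $Y$ and strictly contracts every geodesic arc that enters the complement by a definite amount. We conclude:
\begin{itemize}
\item $\ell_\sigma(g)\le \ell_\rho(g)$ for all $g$, which gives (1), i.e.\ $L_{\rho\sigma}=1$;
\item $\ell_\sigma(g)=\ell_\rho(g)$ if $g$ is conjugate into $\pi_1(Y)$;
\item a curve spending a proportion $\ge c$ of its $\rho$-length outside $Y$ has ratio $\ell_\sigma/\ell_\rho\le 1-\eta(c)$.
\end{itemize}
Hence near-ratio-$1$ curves are exactly those spending asymptotically negligible time outside the core of $Y$. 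A short Markov-coding or Bowen-type argument then gives
$$\Ec_{\rho\sigma}(1)=\delta(\rho|_{\pi_1(Y)}),$$
the critical exponent of $Y$. Concretely, words in a Markov coding spending $o(T)$ time outside $Y$ grow at rate at most that of $Y$ plus $o(1)$.

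\textbf{Tuning the value.} It remains to arrange $\delta(\rho|_{\pi_1(Y)})=\delta$ for any prescribed $\delta\in(0,1)$. For convex cocompact free groups (or subsurface groups with boundary), the critical exponent varies continuously in the structure. It tends to $0$ as the boundary lengths of $Y$ tend to $\infty$, and tends to $1$ as the boundary lengths tend to $0$, by McMullen's asymptotics for Hausdorff dimension. Since $Y$ has nonempty boundary with free lengths, the intermediate value theorem gives the boundary lengths for $Y$ with exponent exactly $\delta$. We then extend to a uniformization of $\Sigma_{g,b}$, and choose $\sigma$ as above. Non-conjugacy of $\rho$ and $\sigma$ is immediate because some boundary length differs.

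\textbf{Item (3).} This follows from (2) by converting closed geodesic counts into orbit counts along the diagonal ray $\partial_{\max}$. The upper bound holds because elements with $\kappa(g)$ in a thin cone around the diagonal are (up to bounded error, via Anosov/Morse-lemma comparisons of $\kappa$ and the Jordan projection) elements whose axes stay mostly in $Y$. The count of these is bounded by the orbit growth of $\pi_1(Y)$ up to subexponential factors. The lower bound holds because $\pi_1(Y)$ itself has $\kappa(g)$ exactly on the diagonal up to bounded error, namely $\rho=\sigma$ on $\pi_1(Y)$ after conjugation, and its orbit growth rate is $\delta$. Homogeneity gives $\psi_\Gamma(t,t)=\delta t$.

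\textbf{Main obstacle.} I expect the hardest step to be the quantitative upper bound: showing that curves with ratio within $\epsilon$ of $1$ must spend only $o(1)$-proportion outside $Y$, uniformly, and that such curves contribute only rate $\delta+o_\epsilon(1)$. My first approach would be a symbolic coding in which excursions outside $Y$ each cost a definite amount of $\sigma$-length. The number of excursions is then at most $C\epsilon T$, and there are at most $\binom{T}{C\epsilon T}$ placements, which is subexponential as $\epsilon\to0$.
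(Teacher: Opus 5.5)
\textbf{There is a genuine gap in your upper bound for Item (2).} Your construction and Item (1) match the paper's: a pair of pants $P$ containing a boundary component, $Y=\overline{\Sigma-P}$, a strip deformation inside $P$, and the lower bounds from $\pi_1(Y)$. But your third bullet is false, and with it the claim that near-ratio-$1$ curves are exactly those spending negligible time outside $Y$.

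Here is a counterexample. Let $c_1$ be a cuff that $P$ shares with $Y$, and let $c_2$ be another cuff of $P$. For $N\ge 2$, the $\rho$-geodesic of $g_N=c_1^N c_2$ lies in the interior of $P$, so it spends all of its length outside $Y$. On the other hand,
\begin{itemize}
\item $\ell_\rho(g_N)=N\ell_\rho(c_1)+O(1)$ and $\ell_\sigma(g_N)=N\ell_\sigma(c_1)+O(1)$;
\item $\ell_\sigma(c_1)=\ell_\rho(c_1)$, because $\rho=\sigma$ on $\pi_1(Y)$.
\end{itemize}
Hence $\ell_\sigma(g_N)/\ell_\rho(g_N)\to 1$.

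The underlying reason is that the deformation only charges crossings of the strip's core arc $\alpha$. Each crossing costs a definite $w>0$ once crossing angles are bounded away from $0$ and $\pi$ (Danciger--Gu\'eritaud--Kassel). An excursion into $P$, however, can wind arbitrarily long around a cuff shared with $Y$ while crossing $\alpha$ only boundedly often. So bounding the \emph{number} of excursions by $C\epsilon T$, as in your last paragraph, does not bound the \emph{time} spent in $P$. A count over words with $o(T)$ time outside $Y$ therefore misses genuinely near-ratio-$1$ curves, and $\Ec_{\rho\sigma}(1)\le\delta$ is not established.

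\textbf{What must be shown, and how the paper does it.} You need to show that this extra freedom, winding about cuffs, carries no entropy. The paper argues with measures:
\begin{itemize}
\item By Kifer's theorem and upper semicontinuity of entropy, $\Ec_{\rho\sigma}(1)\le h(\mu)$ for a weak limit $\mu$ of $\mu_{[g_j]}$ with $\ell_\sigma(g_j)/\ell_\rho(g_j)\to 1$.
\item Since $\ell_\sigma(g_j)\le \ell_\rho(g_j)-w\, i(g_j,\alpha)$, and $i(\cdot,\alpha)$ extends continuously to $\mathcal{C}(\Sigma)$ via the double of $\Sigma$ and Bonahon's intersection form, one gets $i(\phi_\rho(\mu),\alpha)=0$.
\item Hence $\supp\mu$ projects into $Y\cup\partial\Sigma$ (the winding pieces converge onto the cuffs), and so $h(\mu)\le\delta$.
\end{itemize}

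A combinatorial repair of your approach also looks feasible. Cut each closed geodesic at its crossings with $\alpha$; there are at most $\epsilon T/w$ of them. The pieces between crossings are arcs in $Y$ concatenated with windings in the annuli of $P-\alpha$. The windings contribute only subexponentially many choices in total, but this needs to be proved, not asserted.

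\textbf{Two smaller points.}
\begin{itemize}
\item Your tuning step claims $\delta(Y)\to 0$ as the boundary lengths of $Y$ grow. This fails if $Y$ contains, say, a one-holed torus with boundary interior to $Y$ whose geometry is held fixed. Send all Fenchel--Nielsen lengths to infinity instead, or use Furusawa's theorem as the paper does.
\item In Item (3), $\kappa(g)$ and translation lengths do not agree up to bounded error element by element: conjugating by long elements inflates $\kappa$ only. The upper bound $\psi_\Gamma(1,1)\le \Ec_{\rho\sigma}(1)$ should come from Chow--Oh's comparison (their Theorem 8.1), as in the paper, not from the false claim that such elements have axes mostly in $Y$.
\end{itemize}
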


Notice that in this situation $\Gamma$ is Zariski dense in $\mathsf{PO}(2,1)\times\mathsf{PO}(2,1)$. See Figure \ref{fig:free examples} above
 for a picture of Proposition \ref{free examples}.

\begin{remark}
Recall that  if $b>0$, then $\pi_1(\Sigma_{g,b})$ is a free group of rank $2g+b-1$, so  every finitely generated free group of rank at least 3 occurs as
$\pi_1(\Sigma_{g,b})$ for some pair $(g,b)$ with $2g+b>3$ and $b>0$. The only surfaces, with $b>0$, whose interiors admit complete hyperbolic structures 
which are omitted are the one-holed torus and the three-holed sphere.

\end{remark}

\begin{remark} \label{rmk:tube}
In \cite{chow2023jordan}, Chow--Oh also considered entropy defined in terms of the exponential growth rate within tubes, instead of cones as in Equation~\eqref{eqn:ratio entropy}. 
 Items (2) and~(3) in Proposition \ref{free examples}, together with the comparison 
 in \cite[Theorem 8.1]{chow2023jordan}, imply that this tube-type entropy can also be 
 made an arbitrary number in $(0, 1)$.
\end{remark}

\subsection{Products of convex cocompact representations into $\mathsf{PO}(3,1)$}

When we consider representations into  $\mathsf{PO}(3,1)$  the situation is
more complicated even for convex cocompact representations of closed surface groups, a.k.a. quasifuchsian representations. 
For a closed surface $S$ of genus at least two, we consider quasifuchsian representations $\rho, \sigma : \pi_1(S) \to \PO(3, 1)$ and the image $\Gamma := (\rho \times \sigma)(\pi_1(S)) \subset \PO(3, 1) \times \PO(3, 1)$ of their product representation. All the notions described above can naturally be generalized to this setting, and we will  use the same notation.

\begin{proposition} \label{QF examples}
Let $S$ be a closed surface of genus at least two.
\begin{enumerate}
\item 
There exists a Fuchsian representation $\rho_1:\pi_1(S)\to\mathsf{PO}(2,1)\subset \mathsf{PO}(3,1)$
and a quasifuchsian, but not Fuchsian,
representation $\sigma_1: \pi_1(S)\to \mathsf{PO}(3,1)$, so that for $\Gamma_1 := (\rho_1 \times \sigma_1)(\pi_1(S))$,
$$
\partial_{\max} \text{ has slope } 1 \quad \text{and} \quad \psi_{\Gamma_1} > 0 \text{ on } \partial_{\max}.
$$

\item

There exist non-conjugate quasifuchsian, but not Fuchsian, representations $\rho_2, \sigma_2 : \pi_1(S) \to \PO(3, 1)$, so that for $\Gamma_2 := (\rho_2 \times \sigma_2)(\pi_1(S))$,
$$
\partial_{\max} \text{ has slope } 1 \quad \text{and} \quad \psi_{\Gamma_2} > 0 \text{ on } \partial_{\max} .
$$
\end{enumerate}
\end{proposition}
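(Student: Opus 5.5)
For both items the plan is to force $\ell_{\sigma}(g) \le \ell_{\rho}(g)$ for every $g$ while ensuring equality holds on a subgroup with exponential growth. Then $L_{\rho\sigma}=1$, so $\partial_{\max}$ is the diagonal. Moreover, $\psi_{\Gamma}(t,t)$ is bounded below by a constant multiple of the critical exponent of that subgroup, and so is positive.

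\emph{Item (1).} Take a Fuchsian $\rho_1$ and choose a simple closed separating curve (or a non-separating one) $\gamma$ on $X_{\rho_1}$. Bend $\rho_1$ along $\gamma$ by a small nonzero angle $\theta$ to obtain $\sigma_1$. This is a quakebend (complex earthquake) with purely imaginary parameter. For small $\theta$ it is quasifuchsian and not Fuchsian. By construction, $\sigma_1$ agrees with $\rho_1$ on the fundamental group of a complementary subsurface $\Sigma\subset S - \gamma$, which is a free group of rank at least $2$ (or contains one). So $\ell_{\sigma_1}(g)=\ell_{\rho_1}(g)$ for $g\in\pi_1(\Sigma)$.

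The key input is that for a general element $g$, the bent surface is a pleated surface realizing the marking. Hence the closed geodesic for $\sigma_1(g)$ has length at most the length of the piecewise geodesic image of the $\rho_1$-geodesic. That image has the same length as the $\rho_1$-geodesic, because bending is a $1$-Lipschitz, indeed path-isometric, map from $\mathbb{H}^2$ to $\mathbb{H}^3$ that is $\rho_1$–$\sigma_1$ equivariant. Thus $\ell_{\sigma_1}\le\ell_{\rho_1}$, and $L_{\rho_1\sigma_1}=1$.

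Positivity on the diagonal then follows by counting. The elements $g\in\pi_1(\Sigma)$ satisfy $\kappa(g)=(\dist(o,\rho_1(g)o),\dist(o,\sigma_1(g)o))$. Choose $o$ in the convex hull of $\rho_1(\pi_1\Sigma)$, which lies in the unbent plane. Then the two coordinates of $\kappa(g)$ are equal, and they are counted with exponent $\delta_{\rho_1(\pi_1\Sigma)}>0$ in $\mathbb{H}^2$. This gives
$$\psi_{\Gamma_1}(t,t)\ge \tfrac{t}{\sqrt2}\,\sqrt2\,\delta_{\rho_1(\pi_1\Sigma)}>0$$
in the Euclidean norm, after adjusting constants from the cone definition. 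Alternatively, the same bound can be obtained from Jordan projections combined with the Chow--Oh comparison.

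\emph{Item (2).} Start from the pair in (1) and deform both representations by a common operation that stays away from $\Sigma$. For instance, take $\rho_2$ to be a small bend of $\rho_1$ along a curve $\beta$ disjoint from $\gamma$ and from $\Sigma$, and take $\sigma_2$ to be $\sigma_1$ further bent along $\beta$ by the same angle. Then $\rho_2$ is not Fuchsian. The two representations still agree on $\pi_1(\Sigma)$ up to conjugacy, and $\sigma_2$ is obtained from $\rho_2$ by bending along $\gamma$. The comparison $\ell_{\sigma_2}\le\ell_{\rho_2}$ requires a Lipschitz equivariant map from the pleated surface for $\rho_2$ to that for $\sigma_2$. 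The natural candidate is the map that straightens the $\gamma$-bend, which preserves path length on the abstract hyperbolic surface. It yields a path-isometry between the intrinsic hyperbolic structures, and ambient distances in $\mathbb{H}^3$ do not increase. So again $L_{\rho_2\sigma_2}=1$, and the counting argument on $\pi_1(\Sigma)$ goes through verbatim.

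The main obstacle is Item (2). There, the lengths $\ell_{\rho_2}(g)$ are not intrinsic lengths on a pleated surface, so the comparison needs care. If the argument above fails, I would instead choose $\rho_2,\sigma_2$ that are both conjugate on $\pi_1(\Sigma)$ and differ only by a bending cocycle supported on $\gamma$. I would then prove $\ell_{\sigma_2}\le\ell_{\rho_2}$ by a $1$-Lipschitz equivariant map $\Lambda_{\rho_2}$-hull $\to$ $\Lambda_{\sigma_2}$-hull built piecewise on the complementary pieces and glued along lifts of $\gamma$. The fallback route is to prove only that $\partial_{\max}$ has slope $1$ by a small-deformation continuity argument.
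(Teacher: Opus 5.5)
Your Item (1) is correct. It is the $s=0$ case of the paper's argument. Your observation that the equivariant bent plane $\Hb^2\to\Hb^3$ is $1$-Lipschitz is a more elementary substitute for the Bridgeman--Canary--Sambarino citation the paper uses there. Your subsurface lower bound for $\psi_{\Gamma_1}(1,1)$ is also the paper's bound.

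\textbf{The gap is in Item (2).} The straightening map is an isometry for the \emph{intrinsic} metrics of the two pleated planes. So it only compares $\ell_{\sigma_2}(g)$ with the intrinsic length $\ell_{\rho_1}(g)$. But $\ell_{\rho_2}(g)$ is the \emph{ambient} translation length in $\Hb^3$. For curves crossing $\beta$ it is strictly smaller than $\ell_{\rho_1}(g)$, and the axis of $\rho_2(g)$ does not lie on the $\rho_2$-pleated plane.

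The claim that ambient distances do not increase is false in general. A $\gamma$-bend opposite to the $\beta$-bend partially unfolds the surface. In fact your pair can have $L_{\rho_2\sigma_2}>1$:
\begin{itemize}
\item In complex Fenchel--Nielsen coordinates, the complex length of $g$ is holomorphic in the twist--bend parameters and is real for real twists.
\item Hence, after bending $\rho_1$ by angles $\theta=(\theta_\beta,\theta_\gamma)$, the translation length of $g$ is $\ell_{\rho_1}(g)-\frac12 H(\theta,\theta)+O(|\theta|^4)$. Here $H$ is the Hessian of $\ell_g$ under Fenchel--Nielsen twists.
\item Consequently
\[
\ell_{\sigma_2}(g)-\ell_{\rho_2}(g)=-\theta_\gamma\Big(H_{\beta\gamma}\theta_\beta+\frac12 H_{\gamma\gamma}\theta_\gamma\Big)+O\big(|\theta_\gamma|\,|\theta|^3\big).
\]
\item For $g$ crossing both $\beta$ and $\gamma$, $H_{\beta\gamma}\neq 0$; Wolpert's second-variation formula makes it a sum of same-sign terms over pairs of intersection points.
\item Choose $\theta_\gamma H_{\beta\gamma}\theta_\beta<0$ and $|\theta_\gamma|\ll|\theta_\beta|$, both small. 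Then $g$ is strictly longer for $\sigma_2$ than for $\rho_2$.
\end{itemize}

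Your fallbacks do not close the gap. The piecewise $1$-Lipschitz map between convex hulls is never constructed, and constructing it is exactly the difficulty. A continuity argument can only show that $L_{\rho_2\sigma_2}$ is close to $1$, not equal to $1$.

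\textbf{What is missing} is a genuine monotonicity theorem along a single bending ray, which the paper takes from Bridgeman--Canary--Sambarino. Let $\sigma_t$ be the bending of a Fuchsian $\sigma_0$ along a non-separating simple closed geodesic $\alpha$. Then there is $\delta>0$ such that $\ell_{\sigma_t}(g)\le\ell_{\sigma_s}(g)$ for all $0\le s<t\le\delta$ and all $g$. Equality holds if and only if the geodesic of $g$ on $X_{\sigma_0}$ misses $\alpha$.

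Now take $\rho_2=\sigma_s$ and $\sigma_2=\sigma_t$ with $0<s<t\le\delta$. These are non-conjugate, non-Fuchsian quasifuchsian representations with $L_{\rho_2\sigma_2}=1$. They agree on $\pi_1(S-\alpha)$, so your subsurface counting gives $\psi_{\Gamma_2}>0$ on $\partial_{\max}$.

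Your first fallback, two representations differing only by bending along $\gamma$, is exactly this configuration. But the inequality it needs is this non-elementary theorem. It does not follow from Lipschitz properties of pleated surfaces, so you must cite it or prove it.
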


Notice that $\Gamma_1$ is Zariski dense in $\mathsf{PO}(2,1)\times\mathsf{PO}(3,1)$, while
$\Gamma_2$ is Zariski dense in $\mathsf{PO}(3, 1)\times\mathsf{PO}(3,1)$.
We also refer to Gu\'eritaud--Kassel \cite[Section 9.4, Section 10]{GK_maximally} for several illustrative examples. 
In particular, their Example~10.5 describes convex cocompact representations $\rho$ and $\sigma$  of  a rank-two free group, 
where $L_{\rho\sigma}<1$ and $L_{\rho\sigma}$ is not equal  to
the minimal Lipschitz constant of a map $X_\rho \to X_\sigma$ (in the appropriate homotopy class).

\subsection{Historical remarks}
\label{history}
Counting the number of closed geodesics on a closed hyperbolic surface in terms of their lengths is first due to Huber \cite{Huber_counting}, based on Selberg's trace formula. Such a counting result is also called the prime geodesic theorem. Margulis \cite{Margulis_counting,Margulis_monograph} developed a dynamical framework of mixing and equidistribution, and established the counting for compact negatively curved manifolds. For convex cocompact hyperbolic manifolds, the counting was proved by Lalley \cite{Lalley_counting}, using renewal theory.
In a more general setting of $\CAT(-1)$ spaces and their discrete isometry groups, Roblin \cite{Roblin2003ergodicite} extended Margulis' approach based on the theory of conformal measures developed by Patterson \cite{Patterson_fuchsian} and Sullivan \cite{Sullivan_density}.

The study of directional asymptotic growth of products of Fuchsian representations was initiated by Bishop--Steger \cite{bishop-steger} 
which was further generalized by Burger \cite{Burger_Manhattan}.
Schwartz--Sharp \cite{SS_correlation} then counted the number of closed geodesics whose lengths are similar on the given two marked hyperbolic surfaces, based on Lalley's work \cite{Lalley_distribution}.
In other words, their work is related to the entropy along the ratio $1$ which is defined in 
Equation \eqref{eqn:ratio entropy}. As in Section \ref{subsec:GI} this can also be interpreted as a counting result for a discrete subgroup of $\PO(2, 1) \times \PO(2, 1)$, which is a higher-rank Lie group.
For general Anosov subgroups in higher rank, the counting problem corresponding to non-extremal entropy was established by Sambarino 
\cite{Sambarino_quantitative,Sambarino_hyperconvex} and Chow--Fromm \cite{CF_joint}. It was further extended by Bray--Canary--Kao--Martone \cite{BCKM_counting} to relatively Anosov representations of surface groups, and by Blayac--Canary--Zhu--Zimmer \cite{BCZZ_counting} to general Gromov--Patterson--Sullivan systems which include all relatively Anosov subgroups. The works 
of Sambarino and Bray--Canary--Kao--Martone are based on thermodynamic formalism, while the work of Blayac--Canary--Zhu--Zimmer generalizes Margulis' framework.

Asymptotic counting results for cones about internal vectors in the Benoist limit cone were also  established by Chow--Fromm \cite{CF_joint} and 
Chow--Oh \cite{chow2023jordan,CO_multiple} for products of   Anosov representions. Notice that the
 counting involved in Equation \eqref{eqn:ratio entropy} can be regarded as an asymptotic counting result for cones.
Chow--Oh  also established counting results for many different shapes of neighborhoods of vectors internal to the Benoist limit cone, such as tubes and finite boxes. 
The original work of Schwartz--Sharp \cite{SS_correlation} also takes the form of counting in boxes. The counting results of Chow--Fromm and Chow--Oh are based on the local mixing of the one-dimensional diagonal flow, which was established by Sambarino \cite{Sambarino_report} and Chow--Sarkar \cite{CS_local}.

We finally remark that all higher-rank results mentioned above do not handle extremal entropies. The main novelty of this paper is to determine extremal entropies, 
sharpening Thurston's result \cite{thurston1998minimal} on the uniqueness of the geodesic lamination containing all measured laminations minimizing the Lipschitz constant 
between two closed marked hyperbolic surfaces.

\subsection*{Acknowledgements}
The authors would like to thank Jeff Danciger, Fran\c{c}ois Gu\'eritaud and Fanny Kassel for helpful conversations. The authors also thank Hee Oh for useful comments 
on an earlier version of our manuscript and for pointing out Remark \ref{rmk:tube}.

\section{Entropy and geodesic flow} \label{sec:entropy}

In this section, we record basic facts about entropy which will be used in our work. We refer readers to \cite{EL_diagonal} for a more comprehensive exposition.

\subsection{Metric entropy}

Let $(\Xc, \mu)$ be a probability space.
For a (measurable) finite partition $\Pc$ of $\Xc$, its static entropy is defined by
$$
H_{\mu}(\Pc) := - \sum_{P \in \Pc} \mu(P) \log \mu(P)
$$
with the convention that $0 \cdot (- \infty) = 0$. For partitions $\Pc_1, \dots, \Pc_n$ of $\Xc$, we set 
$$
\bigvee_{i = 1}^n \Pc_i := \left\{ \bigcap_{i = 1}^{n} P_i : P_i \in \Pc, \ \forall i = 1, \dots, n \right\}.
$$

We now define the metric entropy of a probability-measure-preserving dynamical system. 

\begin{definition}
    The \emph{metric entropy} of a $\mu$-measure-preserving transformation $\varphi : \Xc \to \Xc$ with respect to $\mu$ is 
    $$
    h_{\mu}(\varphi) := \sup_{\Pc} \lim_{N \to + \infty} \frac{1}{N} \cdot H_{\mu} \left( \bigvee_{i = 0}^{N-1} \varphi^{-i} \Pc \right)
    $$
    where the supremum is over all finite partitions $\Pc$ of $\Xc$.
\end{definition}

For a $\mu$-measure-preserving flow $\{\varphi_t\}_{t \in \Rb}$ on $\mathcal{X}$, we have $h_\mu(\varphi_t) = |t| \cdot~h_\mu(\varphi_1)$ for all $t \neq 0$. Hence, the \emph{metric entropy of the flow} $\{\varphi_t\}_{t \in \Rb}$ with respect to $\mu$ is defined as $$h_\mu(\{\varphi_t\}_{t \in \Rb}) := h_\mu(\varphi_1) .$$

\subsection{Topological entropy and Variational principle}

We now define the notion of entropy for a topological dynamical system. 
Suppose that $(\Xc, \dist_{\Xc})$ is a compact metric space and $\varphi : \Xc \to \Xc$ is continuous.  For  $k \in \Nb$ and $\epsilon > 0$, two points $x, y \in \Xc$ are said to be $(k, \epsilon)$-separated if for some $0 \le i < k$, we have $\dist_{\Xc}(\varphi^i x, \varphi^i x') \ge \epsilon$. We denote by $N( \varphi, k, \epsilon)$ the maximal cardinality of a $(k, \epsilon)$-separated subset of $\Xc$, i.e. any two points in the subset are $(k, \epsilon)$-separated.

\begin{definition}
    The \emph{topological entropy} of a continuous map $\varphi : \Xc \to \Xc$ is
    $$
    h_{\rm top}(\varphi) := \lim_{\epsilon \to 0} \limsup_{k \to + \infty} \frac{1}{k} \cdot \log N(\varphi, k, \epsilon)
    $$
For a continuous flow $\{\varphi_t\}_{t \in \Rb}$ on $\mathcal{X}$, we define the  \emph{topological entropy of the flow} $\{\varphi_t\}_{t \in \Rb}$ by 
$$h_{\rm top}(\{\varphi_t\}_{t \in \Rb}) := h_{\rm top} (\varphi_1). $$
\end{definition}

An important property of topological entropy is the following Variational principle, which relates the topological entropy with metric entropy.

\begin{theorem}[Variational principle \cite{Goodman_entropy}] \label{thm:variational classic}
    Let $\Xc$ be a compact metric space and let $\varphi : \Xc \to \Xc$ be a homeomorphism. Then
    $$
    h_{\rm top}(\varphi) = \sup_{\mu} h_{\mu}(\varphi)
    $$ 
    where the supremum is over all $\varphi$-invariant probability measures on $\Xc$. The same also holds for flows of homeomorphisms on $\Xc$.
\end{theorem}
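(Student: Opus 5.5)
We follow the standard two-inequality argument: Goodwyn's bound $h_\mu(\varphi)\le h_{\rm top}(\varphi)$ for every invariant $\mu$, and Misiurewicz's construction of invariant measures with entropy close to $h_{\rm top}(\varphi)$. The flow case is handled by running the same construction with time averages.

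\emph{Step 1: $h_\mu(\varphi)\le h_{\rm top}(\varphi)$.} Fix a $\varphi$-invariant probability $\mu$ and a finite partition $\Pc=\{P_1,\dots,P_k\}$. By regularity of $\mu$, choose compact sets $K_i\subset P_i$ with $\mu(P_i- K_i)$ small. Form the partition $\mathcal{Q}=\{K_1,\dots,K_k,K_0\}$, where $K_0=\Xc-\bigcup_i K_i$.
\begin{itemize}
\item The conditional entropy $H_\mu(\Pc\mid\mathcal{Q})$ is at most $1$, so $h_\mu(\varphi,\Pc)\le h_\mu(\varphi,\mathcal{Q})+1$.
\item Let $\epsilon$ be less than half the minimal distance between distinct $K_i$, $i\ge 1$. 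Then every $\epsilon$-ball meets at most two elements of $\mathcal{Q}$.
\item Consequently every $(N,\epsilon)$-Bowen ball meets at most $2^N$ elements of $\bigvee_{i=0}^{N-1}\varphi^{-i}\mathcal{Q}$.
\item Cover $\Xc$ by $N(\varphi,N,\epsilon)$ such Bowen balls, centred at a maximal $(N,\epsilon)$-separated set. This gives $H_\mu\big(\bigvee_{i<N}\varphi^{-i}\mathcal{Q}\big)\le \log N(\varphi,N,\epsilon)+N\log 2$.
\end{itemize}
Hence $h_\mu(\varphi)\le h_{\rm top}(\varphi)+\log 2+1$. Applying this to $\varphi^m$ and using $h_\mu(\varphi^m)=m\,h_\mu(\varphi)$ and $h_{\rm top}(\varphi^m)=m\,h_{\rm top}(\varphi)$, we divide by $m$ and let $m\to\infty$.

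\emph{Step 2: $\sup_\mu h_\mu(\varphi)\ge h_{\rm top}(\varphi)$ (Misiurewicz).}
\begin{itemize}
\item Fix $\epsilon>0$ and take maximal $(n,\epsilon)$-separated sets $E_n$.
\item Let $\sigma_n$ be the uniform probability on $E_n$, and set $\mu_n=\frac1n\sum_{i<n}\varphi^i_*\sigma_n$.
\item Choose a subsequence realizing $\limsup_n\frac1n\log\#E_n$ along which $\mu_n\to\mu$ weak-$*$. The limit $\mu$ is $\varphi$-invariant by the usual telescoping estimate.
\item Choose a finite partition $\Pc$ whose elements have diameter $<\epsilon$ and $\mu$-null boundaries. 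This is possible since, for each centre, all but countably many radii give balls with null boundary.
\item Each element of $\bigvee_{i<n}\varphi^{-i}\Pc$ contains at most one point of $E_n$. Hence $\log\#E_n=H_{\sigma_n}\big(\bigvee_{i<n}\varphi^{-i}\Pc\big)$.
\end{itemize}

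\emph{Step 3: the combinatorial estimate (main obstacle).} Fix $q<n$. For each offset $0\le j<q$, decompose $\{0,\dots,n-1\}$ into the blocks $\{j+rq,\dots,j+rq+q-1\}$ plus at most $2q$ leftover indices. Subadditivity of $H$ gives
$$\log\#E_n\le \sum_r H_{\varphi^{j+rq}_*\sigma_n}\Big(\bigvee_{i<q}\varphi^{-i}\Pc\Big)+2q\log\#\Pc .$$
Summing over $j$ and using concavity of $H$ in the measure then yields
$$q\log\#E_n\le n\,H_{\mu_n}\Big(\bigvee_{i<q}\varphi^{-i}\Pc\Big)+2q^2\log\#\Pc .$$
Since $\varphi$ is continuous and $\partial\Pc$ is $\mu$-null, the elements of $\bigvee_{i<q}\varphi^{-i}\Pc$ are $\mu$-continuity sets. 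Therefore $H_{\mu_n}(\cdot)\to H_\mu(\cdot)$ along the subsequence. Dividing by $nq$ and letting $n\to\infty$, then $q\to\infty$, gives $\limsup_n\frac1n\log N(\varphi,n,\epsilon)\le h_\mu(\varphi,\Pc)\le h_\mu(\varphi)$. Letting $\epsilon\to 0$ finishes the homeomorphism case.

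\emph{Step 4: flows.} Step 1 applies verbatim to $\varphi_1$, since every flow-invariant measure is $\varphi_1$-invariant. For Step 2 we replace the averages by $\mu_n=\frac1n\int_0^n(\varphi_t)_*\sigma_n\,dt$, with $E_n$ maximal $(n,\epsilon)$-separated for $\varphi_1$. Any weak-$*$ limit is then invariant under the whole flow. The argument above gives $h_\mu(\varphi_1)$ at least the growth rate of $E_n$: write $\mu_n=\int_0^1\frac1n\sum_{i<n}(\varphi_1^i)_*(\varphi_s)_*\sigma_n\,ds$ and apply the same concavity inequality inside the $s$-integral. Each $(\varphi_s)_*\sigma_n$ is still uniform on an $(n,\epsilon')$-separated set by uniform continuity of $\varphi_s$, $s\in[0,1]$.

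The delicate points are the block-decomposition inequality in Step 3 and checking that null boundaries persist under the finitely many iterates. Everything else is routine.
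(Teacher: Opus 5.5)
Your proof is correct. The paper does not actually prove this statement: it cites Goodman for the homeomorphism case and simply asserts the flow version. What you give is the now-standard self-contained argument: Goodwyn's inequality in Misiurewicz's form for $h_\mu\le h_{\rm top}$, and Misiurewicz's empirical-measure construction for the reverse bound.

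I checked the key steps:
\begin{itemize}
\item The estimate $H_\mu(\Pc\mid\mathcal Q)\le\mu(K_0)\log k$.
\item The $2^N$ bound on the join elements meeting a Bowen ball.
\item The block decomposition followed by concavity of $H$.
\item The fact that the elements of $\bigvee_{i<q}\varphi^{-i}\Pc$ are $\mu$-continuity sets, using $\partial(\varphi^{-i}A)\subset\varphi^{-i}(\partial A)$ and invariance of $\mu$.
\end{itemize}
All of these go through. Relative to the paper's bare citation, your route buys two things. It makes the result self-contained. It also shows that Steps 1--3 never use invertibility of $\varphi$, so continuity of $\varphi$ suffices.

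For flows, the only genuine issue is that the supremum runs over flow-invariant measures, not merely $\varphi_1$-invariant ones. Your continuous time averages $\frac1n\int_0^n(\varphi_t)_*\sigma_n\,dt$ handle this correctly. This avoids the messier route of averaging a $\varphi_1$-invariant measure over $s\in[0,1]$ and then having to prove that its $\varphi_1$-entropy does not drop.

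Three small points should be tightened.
\begin{enumerate}
\item In Step 4, what guarantees that $\varphi_s(E_n)$ is $(n,\epsilon')$-separated is uniform continuity of the inverses $\varphi_{-s}$, $s\in[0,1]$. This follows from joint continuity of the flow on a compact set. The partition $\Pc$ must then be chosen with diameter $<\epsilon'$ rather than $<\epsilon$, after $\mu$ has been fixed.
\item Integrating the per-$s$ inequality uses Jensen's inequality for $x\mapsto -x\log x$. This needs measurability of $s\mapsto(\varphi_{p+s})_*\sigma_n(R)$, which holds since the flow is continuous.
\item Take $E_n$ of cardinality exactly $N(\varphi,n,\epsilon)$; such a set is automatically maximal under inclusion. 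Then the Bowen-ball cover in Step 1 and the final bound in Step 3 are both stated in terms of $N(\varphi,n,\epsilon)$ as claimed.
\end{enumerate}
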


\subsection{The geodesic flow on the non-wandering set}
Suppose that $G$ is a non-elementary, torsion-free, 
Gromov hyperbolic group and \hbox{$\rho:G\to\mathsf{PO}(d,1)$} is a convex cocompact representation.  Recall that $\rho$ is convex cocompact
if and only if whenever $x_0\in\mathbb H^d$, the orbit map  $\tau:G\to\mathbb H^d$ given by
$\tau(g)=g(x_0)$ is a quasi-isometric embedding.
 We then set $X_\rho :=\mathbb H^d/\rho(G)$. 

The Hopf parametrization identifies the unit tangent bundle 
$T^1\mathbb H^d$ with $\big(\partial\mathbb H^d\times~\partial \mathbb H^d-\Delta\big)\times\mathbb R$  (where $\Delta$ is the diagonal) 
in such a way that the geodesic flow on $T^1\mathbb H^d$
is given by $\varphi_t(x,y,s)=(x,y,s+t)$ for all $(x,y,s)\in T^1\mathbb H^d$ and $t\in\mathbb R$. The geodesic flow on $T^1\mathbb H^d$ descends
to the geodesic flow on $T^1X_\rho$.

However, since we are interested in counting closed geodesics, it is natural to restrict to the non-wandering portion $T^1X_\rho^{(nw)}$ which is the closure
of the set of closed periods in $T^1X_\rho$. In terms of the Hopf parametrization, if $\Lambda(\rho)\subset \partial\mathbb H^d$ is the limit set of $\rho(G)$, then $T^1X_\rho^{(nw)}$
is the quotient of $\big(\Lambda(\rho)\times\Lambda(\rho)-\Delta)\times\mathbb R$ by $\rho(G)$. Since $\rho$ is convex cocompact, $T^1X_\rho^{(nw)}$
is compact.  This compactness allows us to apply standard techniques from dynamics. 

Throughout the paper, we consider only the entropy of the geodesic flow but vary the marked hyperbolic structure.  In this regard,  we omit the flow when writing entropy and denote by $h_{\rm top}(\rho)$ the topological entropy of the geodesic flow on $T^1X_\rho^{(nw)}$.
Similarly, for a probability measure $\mu$ on $T^1X_\rho^{(nw)}$ invariant under the geodesic flow, we let
$h(\mu)$ denote the metric entropy of the geodesic flow  on $T^1X_\rho^{(nw)}$ with respect to $\mu$.

Let 
$$
\Mc(\rho) := \left\{ 
  \begin{matrix}
    \text{ probability measures on } T^1X_\rho^{(nw)} \\
    \text{ invariant under the geodesic flow } 
  \end{matrix}
  \right\},
$$
which is a compact metrizable space when equipped with the weak-* topology.
The Variational principle (Theorem \ref{thm:variational classic}) is restated as 
$$
h_{\rm top}(\rho) = \sup_{\mu \in \Mc(\rho)} h(\mu).
$$

The topological entropy turns out to be the same as the exponential growth rate of the number of closed geodesics.  Indeed, denoting by $\ell_{\rho}(g)$ the translation length of $\rho(g)$ on $\Hb^d$ for  $g \in G$, Huber \cite{Huber_counting}, Margulis \cite{Margulis_counting, Margulis_monograph}, Patterson \cite{Patterson_counting}, Lalley \cite{Lalley_counting},  Sullivan \cite{Sullivan_density,Sullivan_entropy}, and Roblin \cite{Roblin2003ergodicite} showed that 
\begin{equation} \label{eqn:Margulis}
  \begin{aligned}
h_{\rm top}(\rho) & = \lim_{T \to + \infty} \frac{\log \# \{ [g] \in [G] : \ell_\rho(g) \le T \}}{T} \\
& =\lim_{T \to + \infty} \frac{\log \# \{ g \in G : d_{\mathbb H^d} \big(o,\rho(g)(o)\big) \le T \}}{T}
  \end{aligned}
\end{equation}
for any $o \in \Hb^d$, where $[G]$ denotes the collection of non-trivial conjugacy classes in $G$, and $[g]$ is the conjugacy class of $g \in G$.
Moreover,\footnote{Here, $f(T) \sim g(T)$ means that the ratio $f(T)/g(T)$ converges to a positive and finite constant as $T \to + \infty$.} 
$$\# \{ [g] \in [G] : \ell_{\rho} (g) \le T \}\sim \frac{e^{h_{\rm top}(\rho)\cdot T}}{T},$$
so
\begin{equation}\label{eqn:Margulis2}
h_{\rm top}(\rho) = \limsup_{T \to + \infty} \frac{\log \# \{ [g] \in [G] : T-1 \le \ell_\rho(g) \le T \}}{T}.
\end{equation}

This Variational principle and Margulis' counting  results were generalized by Kifer \cite{Kifer_large} (see also \cite{Pollicott_large}). 
To state Kifer's theorem, we recall that each non-trivial conjugacy class $[g] \in [G]$ determines an oriented
closed geodesic on $X_\rho$ and hence a closed
orbit of the geodesic flow in $T^1 X_{\rho}$. One obtains
a measure $\widetilde \mu_{[g]}$ of  mass $\ell_\rho(g)$ supported on the closed orbit on $T^1 X_\rho^{(nw)}$ which is invariant under the geodesic flow. We then set 
\begin{equation} \label{eqn:periodic measure}
\mu_{[g]}:= \frac{1}{\ell_\rho(g)} \cdot \widetilde \mu_{[g]} \in \Mc(\rho).
\end{equation}
Note that $\mu_{[g]} \neq \mu_{[g^{-1}]}$. 
Let $[G]_{\rm prim}$  denote
the set of conjugacy classes of non-trivial primitive elements of $G$.
Sigmund \cite{Sigmund_hyperbolic} showed that measures supported on closed orbits form a dense subset of $\Mc(\rho)$, i.e.  
\begin{equation} \label{eqn:Sigmund}
    \overline{\{ \mu_{[g]} : [g] \in [G]_{\rm prim} \} } = \Mc(\rho).
\end{equation}

Then, in our setting,  Kifer's Variational principle takes the following form.

\begin{theorem}[{Kifer \cite{Kifer_large} (see also \cite{Pollicott_large})}] \label{thm:variational}
Suppose that $G$ is a non-elementary, torsion-free, Gromov hyperbolic group and 
$\rho:G\to\mathsf{PO}(d,1)$ is convex cocompact and 
$\Kc \subset \Mc(\rho)$ is a closed subset. Then 
$$
\limsup_{T \to + \infty} \frac{\log \left(\frac{\# \{ [g] \in [G]_{\rm prim} : \mu_{[g]} \in \Kc, \ T - 1 \le \ell_\rho(g) \le T\}}{\# \{ [g] \in [G]_{\rm prim} : T - 1 \le \ell_\rho(g) \le T \}} \right)}{T} \le \sup_{\mu \in \Kc} h(\mu) - h_{\rm top}(\rho).
$$
\end{theorem}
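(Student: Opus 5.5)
The plan is to deduce the estimate from two ingredients: the prime geodesic asymptotics in Equation \eqref{eqn:Margulis2} for the denominator, and a localized version of Misiurewicz's proof of the Variational principle (Theorem \ref{thm:variational classic}) for the numerator. Since $\#\{[g]\in[G]_{\rm prim} : T-1\le \ell_\rho(g)\le T\}$ grows like $e^{h_{\rm top}(\rho)T}/T$ (non-primitive classes are exponentially negligible), the denominator contributes exactly $-h_{\rm top}(\rho)$. It therefore suffices to prove
$$
\limsup_{T\to+\infty}\frac{1}{T}\log\#\{[g]\in[G]_{\rm prim} : \mu_{[g]}\in\Kc,\ T-1\le\ell_\rho(g)\le T\}\le \sup_{\mu\in\Kc}h(\mu).
$$

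\textbf{Reduction to small neighborhoods.} The geodesic flow on the compact set $T^1X_\rho^{(nw)}$ is a hyperbolic (Axiom A) basic set, since $\rho$ is convex cocompact in negative curvature. In particular it is expansive, so the entropy map $\mu\mapsto h(\mu)$ is upper semicontinuous on $\Mc(\rho)$. Fix $\eta>0$. For each $\mu\in\Kc$, upper semicontinuity gives a weak-$*$ open neighborhood $U_\mu$ with $h(\nu)\le h(\mu)+\eta$ for all $\nu\in\overline{U_\mu}$. Since $\Kc$ is closed in the compact space $\Mc(\rho)$, finitely many $U_{\mu_1},\dots,U_{\mu_m}$ cover $\Kc$. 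Because a finite sum does not change the exponential growth rate, it suffices to show: for every open set $U$, the number of primitive $[g]$ with $\mu_{[g]}\in U$ and $T-1\le\ell_\rho(g)\le T$ grows at rate at most $\sup_{\nu\in\overline U}h(\nu)$. Letting $\eta\to0$ then finishes the proof.

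\textbf{Counting in a neighborhood.} By expansivity there is $\epsilon>0$ such that points on distinct primitive closed orbits of period in $[T-1,T]$ are $(T,\epsilon)$-separated, after choosing one point $x_{[g]}$ on each orbit. Let $E_T$ be the resulting separated set. The empirical measure $\frac1T\int_0^T\delta_{\varphi_t x_{[g]}}\,dt$ is within $O(1/T)$ of $\mu_{[g]}$, so for large $T$ it lies in a slightly larger neighborhood. Along a subsequence realizing the $\limsup$ of $\frac1T\log\#E_T$, I then run Misiurewicz's argument:
\begin{enumerate}
\item form $\sigma_T$, the uniform average of the Dirac masses $\delta_x$ over $x\in E_T$, and $\nu_T:=\frac1T\int_0^T(\varphi_t)_*\sigma_T\,dt$;
\item pass to a weak-$*$ limit $\nu$, which is flow-invariant;
\item pick a partition $\Pc$ with diameter less than $\epsilon$ and $\nu(\partial\Pc)=0$;
\item use the standard subadditivity splitting of $H_{\sigma_T}\bigl(\bigvee_{i<T}\varphi^{-i}\Pc\bigr)=\log\#E_T$ to obtain $h(\nu)\ge\limsup\frac1T\log\#E_T$.
\end{enumerate}
Each $\nu_T$ is a convex combination of measures lying in the (slightly enlarged) neighborhood. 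Therefore, if I first choose $U$ convex (weak-$*$ neighborhoods can be taken convex), $\nu\in\overline U$, which gives the desired bound.

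\textbf{Main obstacle.} The delicate step is making the Misiurewicz argument compatible with the localization. Averaging periodic measures and taking limits must keep us inside $\overline U$, which forces the convexity choice and a shrinking of neighborhoods at the covering step. In addition, the separation scale $\epsilon$ must be uniform, coming from the expansivity constant, so that $\log\#E_T$ is genuinely captured by one partition. If convexity causes trouble, the fallback is to work with the Bowen symbolic coding by a suspension of a subshift of finite type, where periodic orbits correspond to periodic words and the count can be done with cylinder sets directly.
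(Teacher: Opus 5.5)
\textbf{Verdict.} Your outline is correct. It takes a genuinely different route from the one behind this statement, and two of its steps need repair, described in the second paragraph.

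\textbf{Comparison with the cited proofs.} The paper does not prove Theorem~\ref{thm:variational}; it imports it from Kifer (and Pollicott). There the bound is obtained within thermodynamic formalism, roughly as follows. Asymptotics for weighted sums $\sum e^{\int_\tau f}$ over closed orbits with period in $[T-1,T]$ identify the pressure $P(f)$ as a limiting log-moment generating function. An exponential Chebyshev and compactness argument then bounds closed sets, with rate given by the Legendre transform of $P$. Finally, the variational principle plus upper semicontinuity identifies that transform with $h_{\rm top}(\rho)-h(\mu)$. You bypass pressure and convex duality entirely. Instead you use a finite cover of $\Kc$ by convex neighborhoods chosen via upper semicontinuity, together with a Misiurewicz argument localized to one neighborhood. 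Note that $\frac1T\int_0^T(\varphi_t)_*\sigma_T\,dt$ differs by $O(1/T)$ in total variation from an average of the $\mu_{[g]}$, and that average lies in $U$ by convexity. So the limit lands in $\overline U$ with no enlargement of neighborhoods, and the symbolic-coding fallback is unnecessary. What your route buys: it is more elementary and needs only expansivity and upper semicontinuity for the numerator, with the prime geodesic theorem used only for the denominator. What the thermodynamic route buys: the matching lower bound on open sets and weighted (Gibbs) versions, none of which the paper uses.

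\textbf{Step that fails as stated: separation of closed orbits.} You claim that points on distinct primitive closed orbits with periods in $[T-1,T]$ are $(T,\epsilon)$-separated. This is false when $X_\rho$ has a closed geodesic $a$ of length less than $1$. For suitable $b$, the orbits of $a^nb$ and $a^{n+1}b$ have periods differing by about $\ell_\rho(a)$. Take the points on each orbit lying about $n/2$ turns before the orbit leaves a collar of $a$. These stay exponentially close for a time exceeding both periods. Expansivity only separates closed orbits whose periods $p\le q$ differ by less than $\epsilon$: then reparametrize by $t\mapsto tq/p$ and extend periodically. The fix is to split $[T-1,T]$ into $\lceil 1/\epsilon\rceil$ windows, which costs only a bounded factor.

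\textbf{Step that needs care: discrete versus continuous averages.} The Misiurewicz splitting for $\varphi_1$ naturally produces the integer-time average $\frac1n\sum_{i<n}(\varphi_1^i)_*\sigma_T$. Its limit $\nu'$ is only $\varphi_1$-invariant and need not lie in $\overline U$, since integer-time samples of a closed orbit need not approximate $\mu_{[g]}$. You can fix this in either of two ways:
\begin{itemize}
\item pass to $\nu=\int_0^1(\varphi_s)_*\nu'\,ds$, which equals your $\lim\nu_T$, and use affinity of entropy to get $h(\nu)=h_{\nu'}(\varphi_1)$;
\item or run the splitting for each $\varphi_sE_T$ and integrate in $s$, using concavity of $H$.
\end{itemize}
You must also convert continuous-time separation into integer-time separation at a smaller scale, via uniform continuity of the flow. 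Only then does each atom of $\bigvee_{i<n}\varphi_1^{-i}\Pc$ contain at most one point of $E_T$.
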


\subsection{Geodesic currents} \label{subsec:currents}

We will only use the theory of geodesic currents in the proof of Proposition \ref{free examples}. We will also explain, in Remark \ref{currents proof}, how one can use
geodesic currents to give an alternate proof of Theorem \ref{thm:subexp}.

Let $G$ be a non-elementary, torsion-free, Gromov hyperbolic group and set 
$$\partial^{(2)}G:=\partial G\times \partial G -  \Delta$$
where $\partial G$ is the Gromov boundary of $G$ and  $\Delta=\{(z,z):z\in \partial G\}$ is the diagonal.
A geodesic current on $G$ is a $G$-invariant Radon measure on
$\partial^{(2)}G$ which is invariant under the
involution $\iota:\partial^{(2)}G\to \partial^{(2)}G$
given by $\iota(x,y):=(y,x)$.
Let $\Cc(G)$ denote the space of geodesic currents on $G$, equipped with the weak-* topology. 

For each primitive non-trivial conjugacy class $[g] \in [G]_{\rm prim}$, 
one can naturally associate the geodesic current 
\begin{equation} \label{eqn:current for curve}
\nu_{[g]} \in \Cc(G)
\end{equation}
which is supported on the $G$-orbit of pairs of  fixed points of $g$ in $\partial G$
and gives each pair
weight $1/2$. 
For $[g^n] \in [G]$,  where $g\in G - \{1\}$ is primitive and $n\ge 2$, 
 we set $\nu_{[g^n]} := n \cdot \nu_{[g]}$. 
Bonahon \cite[Theorem 7]{bonahon-negative} showed that the set of  $\Rb_{>0}$-weighted multiples of currents of the form
$\nu_{[g]}$ is dense in $\Cc(G)$.

Let $\Sigma$ be a compact connected orientable surface of negative Euler characteristic, with (possibly empty) boundary.
We will use the shorthand $\Cc(\Sigma)$ for $\Cc(\pi_1(\Sigma))$.
Suppose that $\rho:\pi_1(\Sigma)\to \mathsf{PO}(2,1)$ is a convex cocompact representation 
which uniformizes $\Sigma$. 
Recall that 
there exists a unique $\rho$-equivariant homeomorphism $\xi_\rho:\partial \pi_1(\Sigma) \to \Lambda(\rho)\subset\partial \Hb^2$.
If $\mu\in\Mc(\rho)$ is a flow-invariant probability measure on $T^1X_\rho^{(nw)}$, then it lifts to a flow-invariant, $\rho(\pi_1(\Sigma))$-invariant measure $\widetilde\mu$ on 
$\left(\Lambda(\rho)\times\Lambda(\rho)-\Delta\right)\times\mathbb R$
which has the form $d\eta_\rho\otimes dt$  for some  $\rho(\pi_1(\Sigma))$-invariant Radon measure $\eta_\rho$ on $\Lambda(\rho)\times\Lambda(\rho)-\Delta$.
This  gives rise to a continuous map 
$$
\phi_\rho:\Mc(\rho) \to \Cc(\Sigma)\quad\text{given by}\quad \phi_{\rho}(\mu) : =\frac{\big(\xi_\rho\times \xi_\rho\big)^*\eta_\rho+\iota^*\Big( \big(\xi_\rho\times\xi_\rho\big)^*\eta_\rho\Big)}{2}.
$$
Notice that
$$ 
\phi_\rho(\mu_{[g]})=\frac{\nu_{[g]}}{\ell_\rho(g)}.$$

\section{Maximal stretching laminations  and extremal entropy}

We recall that Thurston \cite[Theorem 3.1, Theorem 8.5]{thurston1998minimal} showed  that for a closed surface $S$ of genus at least two, if $[\rho]\ne[\sigma]\in\Tc(S)$, then the maximal stretching constant $L_{\rho\sigma}$ 
is equal to the minimal Lipschitz constant  of a homeomorphism $X_\rho \to  X_\sigma$ in the homotopy class of $h_\sigma\circ h_\rho^{-1}$. Moreover,
$L_{\rho\sigma} > 1.$

We will use a strengthening and generalization of Thurston's work due to  Gu\'eritaud--Kassel \cite{GK_maximally}. The {\em  stretch locus} of an $L$-Lipschitz map $f:A\to B$
is the set of points $a\in A$, so that the restriction of $f$ to any open neighborhood of $a$ is not $L'$-Lipschitz for any $L' < L$. A {\em geodesic lamination} on a hyperbolic manifold $X$
is a closed subset  of $X$ which is a disjoint union of complete simple geodesics.

\begin{theorem}{\rm (Gu\'eritaud--Kassel \cite[Theorem 1.3, Corollary 1.12, Lemma 4.7, Lemma 4.10]{GK_maximally})}
\label{MaximalGK}
Suppose that $G$ is a non-elementary, torsion-free, Gromov hyperbolic group and
$\rho,\sigma:G\to \mathsf{PO}(d,1)$  are 
convex cocompact representations. 
If $L_{\rho\sigma}>1$, then there exists an 
$L_{\rho\sigma}$-Lipschitz map $f_{\rho\sigma}:X_\rho\to X_\sigma$, 
in the homotopy class determined by $\sigma \circ \rho^{-1}$,
whose  stretch locus  is a geodesic lamination $\lambda_{\rho\sigma}$ contained in the convex core of $X_\rho$. 
\end{theorem}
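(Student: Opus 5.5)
The plan is to reduce the statement to the equivariant setting of Gu\'eritaud--Kassel: maps $X_\rho\to X_\sigma$ in the homotopy class determined by $\sigma\circ\rho^{-1}$ correspond to $(\rho,\sigma)$-equivariant maps $\Hb^d\to\Hb^d$. Let $C_{\rho\sigma}$ be the infimum of the Lipschitz constants of such maps. The first step is the inequality $C_{\rho\sigma}\ge L_{\rho\sigma}$. If $\widetilde f$ is $C$-Lipschitz and equivariant, and $x$ lies on the axis of $\rho(g)$, then
$$\ell_\sigma(g)\le \dist(\widetilde f(x),\sigma(g)\widetilde f(x)) = \dist(\widetilde f(x),\widetilde f(\rho(g)x)) \le C\,\ell_\rho(g).$$
In particular, the hypothesis $L_{\rho\sigma}>1$ forces $C_{\rho\sigma}>1$.

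The second step is to show that the infimum $C_{\rho\sigma}$ is attained. I would first precompose with a nearest-point retraction onto the convex hull of $\Lambda(\rho)$. This retraction is $1$-Lipschitz and equivariant, so it does not increase Lipschitz constants and lets us restrict attention to the convex core, which is compact. Fixing a basepoint in the convex hull, equicontinuity together with the cocompactness of $\rho$ on the hull gives an Arzel\`a--Ascoli argument. A limit of a minimizing sequence is then an equivariant $C_{\rho\sigma}$-Lipschitz map.

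The third step is the core of Gu\'eritaud--Kassel's Theorem 1.3: when $C_{\rho\sigma}>1$, one considers the intersection $E$ of the stretch loci of all optimal maps. The key points to establish are as follows. First, $E$ is nonempty, and some optimal map has stretch locus exactly $E$; this is obtained by a barycentric averaging of optimal maps in $\CAT(-1)$, which only decreases local Lipschitz constants (their Lemmas 4.7 and 4.10). Second, $E$ is contained in the convex core: near a point outside the hull, one can push the map toward the hull and strictly decrease the local Lipschitz constant. Third, and this is the main obstacle, $E$ is a geodesic lamination. The approach is local. At a point of $E$, if the map is not maximally stretched along some geodesic segment through that point, then a local perturbation using $C>1$ reduces the local Lipschitz constant, again via a convexity argument in $\Hb^d$. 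One then shows that the maximally stretched segments extend to complete geodesics, and that two such geodesics cannot cross, since crossing would contradict the strict inequality $C>1$ by a comparison-triangle argument. I would cite \cite[Theorem 1.3, Lemma 4.10]{GK_maximally} for this step rather than reprove it.

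Finally, with a lamination $\lambda_{\rho\sigma}$ maximally stretched by factor $C_{\rho\sigma}$ in hand, I would approximate $\lambda_{\rho\sigma}$, or a measured sublamination supported on it, by closed geodesics. Their length ratios tend to $C_{\rho\sigma}$, which gives $L_{\rho\sigma}\ge C_{\rho\sigma}$; this is \cite[Corollary 1.12]{GK_maximally}. Combined with the first step, this yields $C_{\rho\sigma}=L_{\rho\sigma}$. The optimal map, descended to $X_\rho\to X_\sigma$, is then the desired $f_{\rho\sigma}$.
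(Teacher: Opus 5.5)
Your proposal is correct and agrees with the paper. The paper gives no independent proof of this statement; it imports it directly from Gu\'eritaud--Kassel (Theorem 1.3, Corollary 1.12, Lemmas 4.7 and 4.10), and your outline is a faithful sketch of their argument: $C_{\rho\sigma}\ge L_{\rho\sigma}>1$, existence of an optimal equivariant map, an optimal map with minimal stretch locus via barycenters, the lamination structure of that locus inside the convex core when $C_{\rho\sigma}>1$, and $C_{\rho\sigma}=L_{\rho\sigma}$ by closing up recurrent leaves. The one detail your Arzel\`a--Ascoli step leaves implicit is that the images of the basepoint under a minimizing sequence stay bounded; this holds because $\sigma(G)$ is non-elementary and so fixes no point of $\partial\Hb^d$.
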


We use this to show the following upper bound on an extremal entropy which will nearly immediately imply our main theorem.
If $\lambda$ is a geodesic lamination on a convex cocompact  hyperbolic manifold $X_\rho$, let 
$$\Mc(\lambda):=\left\{\mu\in\Mc(\rho) : \pi(\supp \mu)\subset \lambda\right\}$$
where $\pi:T^1X_\rho\to X_\rho$ is the basepoint projection map. Recall $\Ec_{\rho \sigma}(\cdot)$ from Equation \eqref{eqn:ratio entropy}.

\begin{theorem}
\label{general entropy}
Suppose that $G$ is a non-elementary, torsion-free, Gromov hyperbolic group and
 $\rho,\sigma:G\to\mathsf{PO}(d,1)$  are 
convex cocompact representations. 
If $L_{\rho\sigma}>1$, then 
$$
\Ec_{\rho \sigma}(L_{\rho \sigma}) \le \sup_{\mu \in \Mc(\lambda_{\rho\sigma})} h(\mu)
$$
 where $\lambda_{\rho\sigma}$ is a geodesic lamination contained in the convex core of $X_{\rho}$ which is the stretch locus for an $L_{\rho\sigma}$-Lipschitz map
 $f_{\rho\sigma}:X_\rho\to X_\sigma$ in the homotopy class determined by $\sigma \circ \rho^{-1}$.
 \end{theorem}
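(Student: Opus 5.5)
The plan is to apply Kifer's Variational principle (Theorem \ref{thm:variational}) to the closed set
$$\Kc_\epsilon := \left\{ \mu \in \Mc(\rho) : \int F\, d\mu \ge L_{\rho\sigma}-\epsilon \right\},$$
where $F$ is a continuous "stretch" function on $T^1X_\rho^{(nw)}$ whose integral against $\mu_{[g]}$ equals $\ell_\sigma(g)/\ell_\rho(g)$. We then show that the sets $\Kc_\epsilon$ shrink to $\Mc(\lambda_{\rho\sigma})$ as $\epsilon\to 0$.

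\textbf{Step 1: the stretch function.} Fix the map $f=f_{\rho\sigma}$ from Theorem \ref{MaximalGK}. Lift it to a $\rho$--$\sigma$ equivariant map $\tilde f:\Hb^d\to\Hb^d$ and smooth it (or use a Busemann-type cocycle) so that there is a continuous function $F$ with the following property. For each flow line $(x,y,s)$, the integral of $F$ along the segment from time $0$ to time $t$ equals, up to a bounded coboundary, the displacement of $\tilde f$ along that geodesic.

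A cleaner choice, which I would commit to, is the following. Let $\xi:\Lambda(\rho)\to\Lambda(\sigma)$ be the equivariant boundary map. Define $\kappa(x,y,t)$ as the signed $\sigma$-distance between the projections onto the geodesic $(\xi x,\xi y)$ of $\tilde f$ of the points at times $0$ and $t$. This is a Hölder cocycle, so by Livšic-type arguments it is cohomologous to the integral of a continuous $F$. Then $\int F\,d\mu_{[g]}=\ell_\sigma(g)/\ell_\rho(g)$ holds exactly. Moreover, since $f$ is $L$-Lipschitz, where $L=L_{\rho\sigma}$, the periodic measures satisfy $\int F\,d\mu\le L$, and by Sigmund's density (\ref{eqn:Sigmund}) this extends to all $\mu\in\Mc(\rho)$. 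Hence $\Kc_\epsilon$ is closed, and $\bigcap_\epsilon \Kc_\epsilon=\Kc_0=\{\mu:\int F\,d\mu=L\}$.

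\textbf{Step 2: maximally stretched measures live on the lamination.} We must show $\Kc_0\subset\Mc(\lambda_{\rho\sigma})$. Take $\mu\in\Kc_0$ and a $\mu$-generic point $v$. Birkhoff's theorem gives that the $\sigma$-length of $\tilde f$ of the geodesic segment of length $t$ from $v$ is at least $(L-o(1))t$, because projection to a geodesic is $1$-Lipschitz.

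Now suppose $\pi(v)$, or a point visited with positive frequency, lies outside $\lambda_{\rho\sigma}$. Then near that point $f$ is $L'$-Lipschitz for some $L'<L$. By recurrence the orbit spends a positive fraction of its time in such a neighborhood. Hence the image length is at most $(L-c)t$, a contradiction. I expect this step to be the main obstacle: making "length of image curve" compatible with the cocycle, and handling the uniformity of $L'$ over compact subsets of the complement of $\lambda_{\rho\sigma}$. Both should follow from compactness of the convex core and the defining property of the stretch locus.

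\textbf{Step 3: semicontinuity and counting.} Entropy of the geodesic flow is upper semicontinuous on $\Mc(\rho)$, since the flow is expansive. Together with the nested compact sets $\Kc_\epsilon\downarrow\Kc_0$, this gives
$$\lim_{\epsilon\to0}\sup_{\Kc_\epsilon}h=\sup_{\Kc_0}h\le\sup_{\Mc(\lambda_{\rho\sigma})}h.$$
Apply Theorem \ref{thm:variational} together with (\ref{eqn:Margulis2}): the number of primitive $[g]$ with $T-1\le\ell_\rho(g)\le T$ and ratio at least $L-\epsilon$ is at most $e^{(\sup_{\Kc_\epsilon}h+o(1))T}$.

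It remains to pass to the count in (\ref{eqn:ratio entropy}). First sum over unit windows up to $T$, which costs only a polynomial factor. Then add the non-primitive classes $[g^n]$, which have the same ratio. Those with $n\ge2$ contribute at most $e^{h_{\rm top}T/2}$. Since $\sup_{\Kc_\epsilon}h\ge0$ while $h_{\rm top}(\rho)/2$ may be larger, bounding them this way is too crude. Instead, note that $[g^n]$ with $\ell_\rho(g^n)\le T$ correspond to primitive $[g]$ in $\Kc_\epsilon$ with $\ell_\rho(g)\le T/n$, and sum over $n\le T/\ell_{\min}$. This gives the stated inequality.
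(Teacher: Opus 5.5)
Your proof is correct. Its skeleton matches the paper's: Kifer's principle on closed sets of invariant measures, the fact that $f_{\rho\sigma}$ is locally $L_0$-Lipschitz with $L_0<L_{\rho\sigma}$ on compact sets off the stretch locus, upper semicontinuity of entropy, and the factor $T/\operatorname{sys}(X_\rho)$ for non-primitive classes. The key step is organized differently, though.

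\textbf{How the paper does the key step.} It never constructs a stretch function. It takes $\Kc_n$ to be the closure of the periodic measures $\mu_{[g]}$ with ratio at least $L_{\rho\sigma}-1/n$, and writes any limit $\mu$ as a weak limit of $\mu_{[g_n]}$. The contradiction then comes directly from $\ell_\sigma(g_n)\le \ell_{X_\sigma}(f(g_n^*))\le (L(1-\delta)+L_0\delta)\,\ell_\rho(g_n)$, where $\mu_{[g_n]}(\pi^{-1}(V))\ge\delta$ eventually by weak convergence.

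\textbf{How your route differs.} You build a continuous reparametrization function $F$. This makes your closed sets $\{\int F\,d\mu\ge L-\epsilon\}$ intrinsic, and their intersection is exactly the set of maximally stretched invariant measures. The support claim then becomes a statement about $\mu$-generic orbits, proved with Birkhoff's theorem. What this buys:
\begin{itemize}
\item a cleaner and stronger intermediate result: every invariant measure with $\int F\,d\mu=L_{\rho\sigma}$ is carried by $\lambda_{\rho\sigma}$;
\item a self-contained passage to $[0,T]$-counts, since your direct summation over unit windows (using $\sup_{\Kc_\epsilon}h\ge 0$) replaces the appeal to Quint's lemma and its non-negativity caveat.
\end{itemize}
The cost is the cocycle construction and the ergodic-theoretic bookkeeping, which the paper's periodic-orbit argument avoids entirely.

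\textbf{Small points to tidy up.}
\begin{itemize}
\item You need neither Liv\v{s}ic's theorem nor H\"older regularity to get $F$. Since $\kappa$ is a continuous $\rho(G)$-invariant cocycle, $F(v):=\kappa(v,1)$ already gives $\int F\,d\mu_{[g]}=\ell_\sigma(g)/\ell_\rho(g)$ exactly, by the cocycle identity and $\kappa(v,\ell_\rho(g))=\ell_\sigma(g)$ on the axis.
\item For non-ergodic $\mu$ with $\int F\,d\mu=L$, Birkhoff only gives convergence to the conditional expectation $\bar F$. The pointwise bound $\kappa(v,t)\le d(\tilde f(p_0),\tilde f(p_t))\le Lt$ gives $\bar F\le L$, hence $\bar F=L$ $\mu$-a.e.
\item Positive visit frequency to $\pi^{-1}(U)$ for $\mu$-a.e.\ point of $\pi^{-1}(U)$ comes from Birkhoff applied to the indicator function, not from recurrence. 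Poincar\'e recurrence only gives infinitely many returns.
\end{itemize}
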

 
 \begin{proof}  
Recall that
  $$
\Ec_{\rho \sigma}(L_{\rho \sigma}) = \lim_{\epsilon\to 0}\limsup_{T \to + \infty}  \frac{\log \# \left\{ [g] \in [G] :  \frac{\ell_\sigma(g)}{\ell_\rho(g)} \ge L_{\rho \sigma} - \epsilon,  \ell_\rho(g) \le T \right\}}{T} .
  $$
 First note that it suffices to prove the desired estimate within $[G]_{\rm prim}$, as it does not affect the exponential growth rate.
 Indeed, for each $\epsilon, T > 0$, we have
 $$ \begin{aligned}
   & \# \left\{ [g] \in [G] :  \frac{\ell_\sigma(g)}{\ell_\rho(g)} \ge L_{\rho \sigma} - \epsilon,  \ell_\rho(g) \le T \right\}  \\
   & \quad \le \frac{T}{\operatorname{sys}(X_\rho)} \cdot \# \left\{ [g] \in [G]_{\rm prim} :  \frac{\ell_\sigma(g)}{\ell_\rho(g)} \ge L_{\rho \sigma} - \epsilon,  \ell_\rho(g) \le T \right\}
 \end{aligned}$$
 where $\operatorname{sys}(X_\rho) > 0$ is the length of the shortest closed geodesic in $X_\rho$. Since $[G]_{\rm prim} \subset [G]$, the above inequality implies that exponential growth rates we consider are the same for $[G]$ and $[G]_{\rm prim}$.

Let $f_{\rho \sigma}$ and $\lambda_{\rho \sigma}$ as in Theorem \ref{MaximalGK}.
For simplicity, we write $L:=L_{\rho\sigma}$, $\lambda:=\lambda_{\rho\sigma}$, and $f:=f_{\rho\sigma}$ for the remainder of the proof.
 
For $n\in \mathbb N$, let
\begin{equation} \label{eqn:closedset}
\Kc_n:=\overline{ \left\{ \mu_{[g]} \in \Mc(\rho) :[g]\in [G]_{\rm prim}\text{ and } \frac{\ell_\sigma(g)}{\ell_\rho(g)} \ge L - \frac{1}{n} \right\} }.
\end{equation}
Applying Theorem \ref{thm:variational} and Equation \eqref{eqn:Margulis2}, we conclude that
$$\begin{aligned}
\limsup_{T \to + \infty} & \frac{\log \# \left\{ [g] \in [G]_{\rm prim} :  \frac{\ell_\sigma(g)}{\ell_\rho (g)} \ge L - \frac{1}{n}, \ T - 1 \le \ell_\rho(g) \le T \right\}}{T} 
 \le \sup_{\mu \in \Kc_n} h(\mu).
\end{aligned}
$$

Suppose $\mu$ is the weak limit of a sequence $\left\{\mu_n\in \Kc_n \right\}_{n \in \Nb }$. Then for each $n \in \Nb$, there exists $g_n\in G$ with 
$$\frac{\ell_\sigma(g_n)}{\ell_\rho(g_n)} \ge L - \frac{1}{n}$$ 
so that $\mu$ is the weak limit of $\mu_{[g_n]}$.

We claim that $\mu\in \Mc(\lambda)$. 
Suppose to the contrary that $\pi (\supp \mu)$ is not contained in $\lambda$. Then there exists a compact subset $V$ of $X_\rho-\lambda$ with non-empty interior $V^{\circ}$
so that $\mu(\pi^{-1}(V^{\circ}))>0$.  By Theorem \ref{MaximalGK}, there exists $L_0<L$ so that the restriction of $f$ to $V$ is locally $L_0$-Lipschitz. 
There exists $\delta, N>0$ 
so that if $n\ge N$, then $\mu_{[g_n]}(\pi^{-1}(V))\ge\delta$. Therefore, 
we have for all $n \ge N$ that 
$$\ell_\sigma(g_n) \le \ell_{X_\sigma}(f(g_n^*))\le \big( L(1-\delta)+L_0\delta \big)\ell_\rho(g_n)$$
where $g_n^* \subset X_{\rho}$ is the closed geodesic given by the conjugacy class $[g_n]$ and $\ell_{X_{\sigma}}(\cdot)$ denotes the length of the curve in $X_{\sigma}$.
This implies that
 $$ L(1-\delta)+L_0\delta \ge L-\frac{1}{n}\quad \text{for all }\  n\ge N,$$
 which is a contradiction.

It then follow from the upper semicontinuity of  the metric entropy function $\Mc(\rho) \to \Rb$,  see  \cite{Newhouse_entropy}, that
$$\begin{aligned}
\lim_{\epsilon\to 0}\limsup_{T \to + \infty} & \frac{\log \# \left\{ [g] \in [G]_{\rm prim} :  \frac{\ell_\sigma(g)}{\ell_\rho(g)} \ge L - \epsilon, \ T - 1 \le \ell_\rho (g) \le T \right\}}{T}  
& \le \sup_{\mu \in \Mc(\lambda)} h(\mu).
\end{aligned}
$$

Our result then follows from Lemma \ref{lem:Quint} below, which is 
stated in \cite[Lemma 3.1.1]{Quint_divergence} under the assumption that $\limsup_{T \to + \infty} \frac{ \log m([T-1, T])}{T} > 0$. However,  
its proof essentially works without strict positivity, but under non-negativity. 
The non-negativity assumption is immediately satisfied in our setting, since  we are applying the lemma to a counting measure and that, by definition,  for all $\epsilon>0$, 
there exists $g\in G$ so that $\frac{\ell_\sigma(g)}{\ell_\rho(g)}\ge L-\epsilon$.
\end{proof}

\begin{lemma}[{\cite[Lemma 3.1.1]{Quint_divergence}}] \label{lem:Quint}
    Let $m$ be a Radon measure on $\Rb_{\ge 0}$. If $\limsup_{T \to + \infty} \frac{ \log m([T-1, T])}{T} \ge 0$, then 
    $$
    \limsup_{T \to + \infty} \frac{ \log m([T-1, T])}{T} = \limsup_{T \to + \infty} \frac{ \log m([0, T])}{T}.
    $$

\end{lemma}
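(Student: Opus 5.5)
Write $a_k := m([k-1,k])$ for integers $k\ge 1$, set $\alpha := \limsup_{T\to+\infty} \frac{\log m([T-1,T])}{T} \ge 0$, and let $\beta := \limsup_{T\to+\infty}\frac{\log m([0,T])}{T}$. The inequality $\alpha \le \beta$ is immediate, since $m([T-1,T]) \le m([0,T])$ and $\log$ is monotone. So the whole task is to prove $\beta \le \alpha$, and this is where the hypothesis $\alpha \ge 0$ will be used.

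For the reverse inequality, fix $\eta > 0$.

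First, reduce to integer windows. Any $T \ge 1$ lies in some $[k-1,k]$ with $k = \lceil T \rceil$, so $m([T-1,T]) \le a_{k-1} + a_k$. Hence, by the definition of $\alpha$, there is $k_0$ such that $a_k \le e^{(\alpha+\eta)k}$ for all $k \ge k_0$.

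Second, handle the initial segment. Since $m$ is Radon, $C := m([0,k_0])$ is finite.

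Third, sum the pieces. For $T \ge k_0$,
\[
m([0,T]) \le C + \sum_{k=k_0+1}^{\lceil T\rceil} a_k \le C + (T+1)\, e^{(\alpha+\eta)(T+1)},
\]
where the second bound uses $\alpha+\eta > 0$, so that each term is at most the last one.

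Finally, take limits. Taking $\log$, dividing by $T$ and letting $T\to\infty$ gives $\beta \le \alpha + \eta$; the constant $C$ and the polynomial factor $T+1$ disappear because $\alpha + \eta > 0$. Letting $\eta \to 0$ yields $\beta \le \alpha$.

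The only delicate point is the role of the non-negativity hypothesis. The argument above needs $\alpha+\eta>0$ in two places: to bound the sum of exponentials by (number of terms)$\times$(largest term), and to make the constant $C$ negligible against $e^{(\alpha+\eta)T}$. If $\alpha$ were negative, $m([0,T])$ would tend to a positive constant, so $\beta = 0 > \alpha$ and the lemma would fail. With $\alpha \ge 0$, any $\eta>0$ makes the exponent strictly positive, which is exactly what is needed. This is why Quint's strict positivity assumption can be weakened to non-negativity, as remarked in the proof of Theorem \ref{general entropy}. I should also handle the degenerate case $m([0,T]) = 0$ for all $T$ (with the convention $\log 0 = -\infty$), but this case is excluded by $\alpha \ge 0$.
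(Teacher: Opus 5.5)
Your proof is correct and is the standard argument. The paper does not reprove the lemma: it only cites Quint and remarks that his proof survives weakening $\alpha>0$ to $\alpha\ge 0$, and your write-up shows exactly where that hypothesis is used (indeed it shows $\beta\le\max(\alpha,0)$ in general). Two cosmetic points: the bound $a_k\le e^{(\alpha+\eta)k}$ follows directly from the definition of $\alpha$ at integer times $T=k$, so the estimate $m([T-1,T])\le a_{k-1}+a_k$ plays no role there, and the case $\alpha=+\infty$ should be set aside first as trivial.
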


\begin{remark}
\label{currents proof} Here we explain how to use the theory of geodesic currents (see Section \ref{subsec:currents}) 
to rephrase the proof of Theorem \ref{general entropy}, and hence Theorem \ref{thm:subexp},
when $d=2$,  $G=\pi_1(S)$ where $S$ is a closed connected orientable surface of genus at least two.
Bonahon \cite{bonahon} showed that the geometric intersection 
number between free homotopy classes of closed curves on $S$ continuously extends to a symmetric bi-linear form $i : \Cc(S) \times \Cc(S) \to \Rb_{\ge 0}$. 
Moreover, for each $[\rho] \in \Tc(S)$, there exists $\Lc_{\rho} \in \Cc(S)$, called the Liouville current, so that 
$$i(\nu_{[g]}, \Lc_{\rho}) = \ell_{\rho}(g) \quad \text{for all }g \in \pi_1(S)
$$
where $\nu_{[g]}$ is as in Equation \eqref{eqn:current for curve}.

The closed set $\Kc_n\subset\Mc(\rho)$ in the proof of Theorem \ref{general entropy}, Equation \eqref{eqn:closedset}, can then be replaced with 
$$\widehat \Kc_n := \overline{\left\{ \frac{\nu_{[g]} }{\ell_\rho(g)} \in\Cc(S): [g]\in [G]_{\mathrm{prim}}\text{ and } \frac{i(\nu_{[g]},\Lc_\sigma)}{i(\nu_{[g]},\Lc_\rho)}\ge L_{\rho\sigma}-\frac{1}{n}\right\} }\subset\Cc(S).$$
Notice that $\widehat\Kc_n=\phi_\rho(\Kc_n)$ for all $n \in \Nb$.
\end{remark}

\section{Vanishing extremal entropy and growth indicator}

In this section, we deduce Theorem \ref{thm:subexp}, Corollary \ref{cor:GI}, and Theorem \ref{thm:necessary vanishing} from the following.

\begin{corollary} \label{cor:zero extremal}
 Suppose that $G$ is a non-elementary, torsion-free, 
Gromov hyperbolic group and $\rho : G \to \PO(2, 1)$ and 
$\sigma : G \to \PO(d, 1)$ are 
  convex cocompact representations. If $L_{\rho \sigma} >~1$, then 
$$
\Ec_{\rho \sigma}(L_{\rho \sigma}) = 0.
$$

\end{corollary}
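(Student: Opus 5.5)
By Theorem \ref{general entropy}, it suffices to show that every $\mu\in\Mc(\lambda_{\rho\sigma})$ has zero metric entropy. Since $\rho$ takes values in $\PO(2,1)$, $X_\rho$ is a hyperbolic surface (possibly of infinite area but convex cocompact), and $\lambda:=\lambda_{\rho\sigma}$ is a geodesic lamination contained in the compact convex core. The plan is to show that the geodesic flow restricted to the compact flow-invariant set
$$\Lambda := \pi^{-1}(\lambda)\cap T^1X_\rho^{(nw)}$$
has zero topological entropy. Any $\mu\in\Mc(\lambda)$ is supported on the set of unit tangent vectors tangent to leaves of $\lambda$. This set is a closed flow-invariant subset of $\Lambda$, and it is compact. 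By the Variational principle (Theorem \ref{thm:variational classic}) applied to the flow restricted to this compact set, $h(\mu)$ is bounded by the topological entropy of that restriction. It therefore suffices to show this topological entropy vanishes.

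The key geometric input is the structure of geodesic laminations on a hyperbolic surface. A geodesic lamination $\lambda$ in a compact region of a hyperbolic surface has Hausdorff dimension zero transversally (Birman--Series). More concretely, the number of distinct ``$\epsilon$-tracks'' of length $T$ followed by leaves of $\lambda$ grows polynomially in $T$. I would argue this via train tracks. The lamination $\lambda$ is carried by a train track $\tau$ with finitely many branches, inside a compact subsurface containing the convex core. A $(T,\epsilon)$-separated set of tangent vectors to $\lambda$ gives distinct finite train paths of combinatorial length $O(T)$. Since leaves are simple and pairwise disjoint, these paths are determined by a bounded number of switch-crossing counts. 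The number of such paths of length $n$ realized by a single lamination is therefore at most polynomial in $n$, roughly $O(n^{k})$ with $k$ bounded in terms of the number of branches. Equivalently, one can use the Birman--Series estimate directly: the set of complete simple geodesics in a compact part of a hyperbolic surface has the property that the number of distinct length-$T$ simple geodesic arcs, up to $\epsilon$-closeness, grows polynomially. This gives
$$N(\varphi|_{\lambda},k,\epsilon)\le C_\epsilon k^{m}$$
and hence zero topological entropy.

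Combining these steps gives $h(\mu)=0$ for all $\mu\in\Mc(\lambda)$, so $\Ec_{\rho\sigma}(L_{\rho\sigma})\le 0$. The reverse inequality $\Ec_{\rho\sigma}(L_{\rho\sigma})\ge 0$ is immediate, because the counting sets are nonempty for every $\epsilon$ once $T$ is large. The main obstacle is the polynomial growth bound for orbit segments along $\lambda$. This is exactly where $\rho$ taking values in $\PO(2,1)$ matters: in higher dimensions a geodesic lamination need not have this rigidity, and the Birman--Series counting uses two-dimensionality and the disjointness and simplicity of leaves. I would first try quoting the Birman--Series polynomial bound on simple geodesic arcs, adapted to the compact convex core of a convex cocompact surface. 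The fallback is the train track counting argument sketched above.
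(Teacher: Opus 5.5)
Your proposal is correct and follows the paper's proof. Theorem \ref{general entropy} reduces the claim to $\sup_{\mu\in\Mc(\lambda_{\rho\sigma})} h(\mu)$, and the Variational principle bounds this by the topological entropy of the geodesic flow on the vectors tangent to leaves of $\lambda_{\rho\sigma}$, which vanishes; the paper simply cites Fathi's Lemma 3.3 for that last fact, while you sketch it via Birman--Series or train-track polynomial growth, which also works. One small slip: $\pi^{-1}(\lambda)\cap T^1X_\rho^{(nw)}$ is not flow-invariant, since it contains vectors based on $\lambda$ but transverse to its leaves, but this is harmless because you immediately pass to the vectors tangent to leaves, which is the correct compact invariant set.
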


\begin{proof}
  First notice that $\Ec_{\rho \sigma}(L_{\rho \sigma}) \ge 0$. 
By Theorem \ref{general entropy}, we have
$$
\Ec_{\rho \sigma}(L_{\rho \sigma}) \le \sup_{\mu \in \Mc(\lambda_{\rho \sigma})} h(\mu).
$$
By Theorem \ref{thm:variational classic}, the right hand side is equal to the topological entropy of the geodesic flow on $\lambda_{\rho \sigma}$, which is a geodesic lamination on the convex core of a convex cocompact hyperbolic surface $X_{\rho}$. By a result of Fathi \cite[Lemma 3.3]{Fathi_expansiveness}, this topological entropy is zero,
completing  the proof.
\end{proof}

\begin{remark}
The main obstacle to extending Corollary \ref{cor:zero extremal} to a general representation $\rho : G \to \mathsf{PO}(d,1)$ is that we do not know how to determine the topological entropy of
a geodesic lamination in higher dimensions.
\end{remark}

We now deduce our claim about the vanishing of the growth indicator. 
Recall the definition of the growth indicator from Equation \eqref{eqn:GI}, whose definition works verbatim for products of representations into $\PO(d, 1)$.

\begin{corollary} \label{cor:necessary vanishing general}
Suppose that $G$ is a non-elementary, torsion-free, 
 Gromov hyperbolic group and $\rho : G \to \PO(2, 1)$ 
  and 
 $\sigma : G \to \PO(d, 1)$ are 
 convex cocompact representations, with Zariski dense images. 
 For $\Gamma := (\rho \times \sigma)(G)$, if $\partial_{\max}$ has slope strictly bigger than $1$, then
  $$
  \psi_{\Gamma} = 0 \quad \text{on } \partial_{\max}.
  $$

\end{corollary}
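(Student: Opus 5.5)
The plan is to translate the vanishing of the extremal entropy from Corollary \ref{cor:zero extremal} into vanishing of $\psi_\Gamma$ along the ray $\partial_{\max}$. By positive homogeneity it suffices to show $\psi_\Gamma(1, L)=0$, where $L:=L_{\rho\sigma}>1$ is the slope of $\partial_{\max}$. The lower bound $\psi_\Gamma\ge 0$ on $\Bc_\Gamma$ is Quint's result. So the task is the upper bound $\psi_\Gamma(1,L)\le 0$. For this we show that for every $\eta>0$ there is an open cone $\Cc\ni(1,L)$ such that the number of $g\in G$ with $\kappa(g)\in\Cc$ and $\norm{\kappa(g)}\le T$ is at most $e^{\eta T}$ for large $T$. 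We use the norm $\norm{(u_1,u_2)}=u_1$ on the relevant half-plane, or any comparable norm.

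The first step is to pass from Cartan projections to Jordan projections, i.e. from group elements to conjugacy classes. Since $\rho$ and $\sigma$ are convex cocompact, there is a standard reduction. Each $g$ can be written as $g = a h a^{-1}$ up to bounded error, with $h$ cyclically reduced, so that $d(o,\rho(g)o) = \ell_\rho(h) + 2\, d(o,\rho(a)o) + O(1)$, and similarly for $\sigma$, with a uniform constant. Equivalently, there is a finite set $F\subset G$ such that for every $g$ there is $f\in F$ with $\kappa(gf)$ within bounded distance of the Jordan projection $(\ell_\rho(gf),\ell_\sigma(gf))$ (Benoist/Anosov "loxodromic perturbation" lemma). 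This gives a map $g\mapsto [gf]$ which is at most $e^{o(T)}$-to-one among elements with $\norm{\kappa(g)}\le T$, because conjugacy classes of $\rho$-length $\le T$ have at most polynomially many (in fact $e^{o(T)}$) representatives with $d(o,\rho(\cdot)o)\le T+C$. A cleaner alternative is to parametrize elements by (conjugacy class, conjugator). This fails to be subexponential if conjugators are long, so I would use the perturbation lemma instead.

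Next we handle the cone condition. If $\kappa(g)\in\Cc$, a cone of angular width $\epsilon$ around slope $L$, and $\norm{\kappa(g)}$ is large, then the bounded error makes the Jordan ratio $\ell_\sigma(gf)/\ell_\rho(gf)\ge L-2\epsilon$. Also $\ell_\rho(gf)\le T+C$. Hence
$$\#\{g:\kappa(g)\in\Cc,\ \norm{\kappa(g)}\le T\}\le \abs{F}\, e^{o(T)}\,\#\Big\{[h]:\tfrac{\ell_\sigma(h)}{\ell_\rho(h)}\ge L-2\epsilon,\ \ell_\rho(h)\le T+C\Big\},$$
plus a bounded contribution from short elements. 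Taking $\frac{1}{T}\log$, then $\limsup_T$, and then $\epsilon\to 0$ bounds $\psi_\Gamma(1,L)$ by a constant multiple of $\Ec_{\rho\sigma}(L)$, which is $0$ by Corollary \ref{cor:zero extremal}. Here we use $L>1$ and the fact that $\rho$ takes values in $\PO(2,1)$. The passage between $[T-1,T]$ shells and balls is handled by Lemma \ref{lem:Quint}.

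The main obstacle is the first step. We must ensure that the perturbation by $f\in F$ controls both factors simultaneously, with a uniform additive error, so that the direction of $\kappa(g)$ and the Jordan direction of $gf$ agree up to $O(1/\norm{\kappa(g)})$. We must also control the multiplicity of the map $g\mapsto[gf]$ subexponentially. I would first try the product Anosov structure: $\rho\times\sigma$ is Borel Anosov, and Zariski density lets us invoke Benoist's lemma that there is a finite $F$ with $\norm{\kappa(g)-\lambda(gf)}\le C$ for all $g$. The multiplicity estimate then follows because $[gf]$ with $\ell_\rho\le T$ has at most $O(T)\cdot e^{o(T)}$ lifts $g$ of bounded-error length. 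Indeed, such lifts correspond to choosing a basepoint along the axis, with bounded fibers by discreteness.
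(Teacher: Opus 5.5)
Your proof is correct. Its skeleton matches the paper's: reduce to $u=(1,L_{\rho\sigma})$, get $\psi_\Gamma(u)\ge 0$ from Quint, bound $\psi_\Gamma(u)$ above by the extremal entropy $\Ec_{\rho\sigma}(L_{\rho\sigma})$, and conclude with Corollary \ref{cor:zero extremal}.

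The difference is the middle step. The paper simply cites \cite[Theorem 8.1]{chow2023jordan}, the general comparison between the Cartan growth indicator and Jordan-projection counts in cones. (The paper writes $\Ec_{\rho\sigma}(1)$ there; it should read $\Ec_{\rho\sigma}(L_{\rho\sigma})$, as in your version.) You instead prove that comparison directly, using a finite set $F$ with $\norm{\kappa(g)-\lambda(gf)}\le C$ together with a multiplicity count.

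What each route buys:
\begin{itemize}
\item The paper's route is three lines but relies on the machinery behind Chow--Oh.
\item Your route is self-contained. In this convex cocompact rank-one setting it does not even need Benoist's Zariski-dense lemma. Any hyperbolic group has a finite $F$ such that some quasi-axis of $gf$ in the Cayley graph passes uniformly near the identity, and the Morse lemma transfers this to $\rho$ and $\sigma$ simultaneously.
\item If you do invoke Benoist's lemma, you need $\Gamma$ itself to be Zariski dense in the product. This follows from factorwise Zariski density via Goursat's lemma, since the graph of an isomorphism would force $d=2$ and $\ell_\sigma=\ell_\rho$, contradicting slope $>1$. The paper also asserts this without comment.
\end{itemize}

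One sentence in your first step is false as written. You say a conjugacy class of $\rho$-length $\le T$ has $e^{o(T)}$ representatives $x$ with $d(o,\rho(x)o)\le T+C$. This fails when $\ell_\rho(x)$ is much smaller than $T$: conjugating a fixed short element by words of length up to $T/2$ gives exponentially many such representatives.

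What you actually need, and what your last paragraph correctly uses, is a count of representatives with \emph{bounded discrepancy}, $d(o,\rho(x)o)\le \ell_\rho(x)+C'$.
\begin{itemize}
\item Since $\abs{d(o,\rho(gf)o)-d(o,\rho(g)o)}\le d(o,\rho(f)o)$, the perturbation lemma gives bounded discrepancy for $x=gf$ in both factors.
\item Writing $x=khk^{-1}$, bounded discrepancy places $\rho(k)^{-1}o$ within bounded distance of the axis of $\rho(h)$. This uses that $\ell_\rho$ is bounded below by the systole.
\item Discreteness then gives $O(\ell_\rho(h)+1)$ such conjugates modulo the cyclic centralizer, so the multiplicity is $O(\abs{F}\,T)$.
\end{itemize}

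Finally, Lemma \ref{lem:Quint} is not needed in your direction, since shell counts are trivially bounded by ball counts.
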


\begin{proof}

Let $u = (1, L_{\rho \sigma}) \in \partial_{\max}$. It suffices to show that $\psi_{\Gamma}(u) = 0$.
Our assumptions imply that $\Gamma$ is Zariski dense, and hence we have $\psi_{\Gamma}(u) \ge 0$ by Quint \cite{Quint_divergence}. On the other hand, 
by \cite[Theorem 8.1]{chow2023jordan}, $\psi_{\Gamma}(u) \le \Ec_{\rho \sigma}(1)$. Now applying Corollary \ref{cor:zero extremal} finishes the proof.
\end{proof}

We now deduce Theorem \ref{thm:subexp}, Corollary \ref{cor:GI}, and Theorem \ref{thm:necessary vanishing}. Let $S$ be a closed surface of genus at least two.

\bigskip\noindent
 {\bf Theorem \ref{thm:subexp}.}
 {\em
If $[\rho]\ne[\sigma]\in \Tc(S)$, then
\begin{enumerate}
    \item almost maximally stretched closed geodesics have subexponential growth:
    $$
\lim_{\epsilon \to 0} \limsup_{T \to + \infty} \frac{\log \# \left\{ [g] \in [\pi_1(S)] : \frac{\ell_\sigma(g)}{\ell_\rho(g)} \ge L_{\rho\sigma} - \epsilon \text{ and } \ell_\rho(g) \le T \right\}}{T} = 0.
$$
\item almost minimally stretched closed geodesics have subexponential growth:
    $$
\lim_{\epsilon \to 0} \limsup_{T \to + \infty} \frac{\log \# \left\{ [g] \in [\pi_1(S)] : \frac{\ell_\sigma(g)}{\ell_\rho(g)} \le \frac{1}{L_{\sigma\rho}} + \epsilon \text{ and } \ell_\sigma(g) \le T \right\}}{T} = 0.
$$
\end{enumerate}
}

\bigskip\noindent
{\em Proof of  Theorem \ref{thm:subexp}:}
Since Item (2) in the statement follows from Item (1) by switching $\rho$ and $\sigma$, it suffices to prove Item (1).
In this case, Thurston \cite{thurston1998minimal} showed that $L_{\rho\sigma}>1$, so this follows from Corollary \ref{cor:zero extremal}.
\qed

\bigskip\noindent
{\bf Corollary \ref{cor:GI}.} {\em
If $[\rho]\ne[\sigma]\in\Tc(S)$ and 
$\Ga := (\rho \times \sigma)\big(\pi_1(S)\big) \subset \mathsf{PO}(2,1)\times\mathsf{PO}(2,1)$, then 
    $$
    \psi_{\Ga} = 0 \quad \text{on } \partial \Bc_{\Ga}.
    $$
}

\bigskip\noindent
{\em Proof of Corollary \ref{cor:GI}:}
Recall that by Thurston \cite{thurston1998minimal}, we have $L_{\rho \sigma} > 1$, and hence $\psi_{\Gamma} = 0$ on $\partial_{\max}$ by Corollary \ref{cor:necessary vanishing general}. Switching $\rho$ and $\sigma$ and applying the same argument, it follows that $\psi_{\Gamma} = 0$ on $\partial_{\min}$, as desired.
\qed

\bigskip\noindent
{\bf Theorem \ref{thm:necessary vanishing}.} {\em
Let  $\Gamma \subset \PO(2, 1) \times \PO(2, 1)$ be a Zariski dense Borel Anosov subgroup. Then $\psi_{\Gamma} = 0$ on at least one component of $\partial \Bc_{\Gamma}$. More precisely:
\begin{enumerate}
  \item If the slope of $\partial_{\max}$ is strictly bigger than $1$, then 
  $$
  \psi_{\Gamma}= 0 \quad \text{on } \partial_{\max}.
  $$
  \item If the slope of $\partial_{\min}$ is strictly smaller than $1$, then 
  $$
  \psi_{\Gamma}= 0 \quad \text{on } \partial_{\min}.
  $$
\end{enumerate}
}

\bigskip\noindent
{\em Proof of Theorem \ref{thm:necessary vanishing}:} 
Recall that $\Gamma$ has a finite index subgroup of the form $
(\rho \times \sigma)(G)$, for some non-elementary, torsion-free, Gromov hyperbolic group $G$ 
and non-conjugate convex cocompact representations $\rho, \sigma : G \to \PO(2, 1)$. 
Notice that $\psi_\Gamma=\psi_{(\rho\times\sigma)(G)}$, so
we may assume that $\Gamma = (\rho \times \sigma)(G)$. Now Item (1) is a special case 
of Corollary \ref{cor:necessary vanishing general}, and  Item (2) follows from Item (1) 
by switching the order of $\rho$ and $\sigma$. Finally, as $\rho$ and $\sigma$ are 
non-conjugate,  at least one of Item (1) and Item (2) occurs.
\qed

\section{Mixed behavior of extremal entropies}

In this section, we construct explicit examples showing that the strict inequality for the slopes of $\partial_{\max}$ or $\partial_{\min}$ is necessary to have the vanishing of growth indicator. In other words, we prove Proposition \ref{free examples} and Proposition~\ref{QF examples}.

\subsection{Results in $\mathsf{PO}(2,1)$}

We first consider products of representations into $\PO(2, 1)$. We show that any real number in $(0, 1)$ can be realized as the extremal entropy when the slope of $\partial_{\max}$ is one. 
Recall that $\Sigma_{g, b}$ denotes the compact connected orientable surface of genus $g$ with $b$ boundary components.

\bigskip\noindent
{\bf Proposition \ref{free examples}.} {\em 
Suppose $2g + b > 3$  and $b>0$. Then for any $\delta \in (0, 1)$, there exist non-conjugate convex cocompact representations 
$\rho, \sigma : \pi_1(\Sigma_{g, b}) \to \PO(2, 1)$  which uniformize $\Sigma_{g,b}$ so that:
\begin{enumerate}
  \item $L_{\rho \sigma} = 1$.
  \item $\Ec_{\rho \sigma}(1) = \delta$.
  \item If $\Ga := (\rho \times \sigma)(\pi_1(\Sigma_{g, b}))$, then $\partial_{\max} = \{(t, t) : t > 0 \}$ has slope 1 and 
  $$\psi_{\Gamma}(t, t) = \delta \cdot t \quad \text{for } t > 0.$$
\end{enumerate}
} 

\bigskip\noindent
{\em Proof of Proposition \ref{free examples}:} 
For simplicity, let $\Sigma := \Sigma_{g, b}$ in the proof. There exists a pair of pants $P \subset \Sigma$ such that $\partial P$ contains at least one component of $\partial \Sigma$ and $\Sigma - P$ is connected and non-empty 
  (e.g. Figure \ref{fig:1Lip}). 
  We set $C := \partial P \cap \Sigma^{\circ}$, where $\Sigma^{\circ}$ is the interior of $\Sigma$.

\begin{figure}[h]
\begin{tikzpicture}[scale=0.8]

\draw[blue!70, fill=blue!30, opacity=0.7] (-1.15, 0.2) .. controls (0, -0.5) and (1, -0.7) .. (1.5, 1.23) -- (1.7, 0.77+0.43) .. controls (1, -0.9) and (0, -0.7) .. (-1.25, 0.1);

\draw[blue!80, fill=blue!30, opacity=0.7] (-1.15, 0.2) .. controls (-0.5, -1) and (1, -1) .. (1.5, 0.76) -- (1.7, 0.77) .. controls (1, -1.2) and (-0.5, -1.2) .. (-1.25, 0.1);

\draw[blue] (1.5, 1.5) node {\tiny strip with core $\alpha$};

  \draw (1, 1) .. controls (1, 1.3) and (2, 1.3) .. (2, 1) .. controls (2, 0.7) and (1, 0.7) .. (1, 1);

  \draw (1, 1) .. controls (1, 0) and (0.6, 0) .. (0.5, 0) .. controls (0.4, 0) and (0, 0) .. (0, 1);

  \draw (-1, 1) .. controls (-1, 1.3) and (0, 1.3) .. (0, 1) .. controls (0, 0.7) and (-1, 0.7) .. (-1, 1);

  \draw (1 -2, 1) .. controls (1 -2, 0) and (0.6 -2, 0) .. (0.5 -2, 0) .. controls (0.4 -2, 0) and (0 -2, 0) .. (0 -2, 1);

  \draw (-1 -2, 1) .. controls (-1 -2, 1.3) and (0 -2, 1.3) .. (0 -2, 1) .. controls (0 -2, 0.7) and (-1 -2, 0.7) .. (-1 -2, 1);

  \draw (1 -4, 1) .. controls (1 -4, 0) and (0.6 -4, 0) .. (0.5 -4, 0);

  \filldraw (-4.5, 0.5) circle(1.5pt);

  \filldraw (-5, 0.5) circle(1.5pt);

  \filldraw (-4, 0.5) circle(1.5pt);

  \begin{scope}[shift={(-8, 0)}]

      \draw (2, 1) .. controls (2, 0) and (2.4, 0) .. (2.5, 0);

  \draw (1, 1) .. controls (1, 1.3) and (2, 1.3) .. (2, 1) .. controls (2, 0.7) and (1, 0.7) .. (1, 1);

  \draw (1, 1) .. controls (1, 0) and (0.6, 0) .. (0.5, 0) .. controls (0.4, 0) and (0, 0) .. (0, 1);

  \draw (-1, 1) .. controls (-1, 1.3) and (0, 1.3) .. (0, 1) .. controls (0, 0.7) and (-1, 0.7) .. (-1, 1);

  \draw (1 -2, 1) .. controls (1 -2, 0) and (0.6 -2, 0) .. (0.5 -2, 0) .. controls (0.4 -2, 0) and (0 -2, 0) .. (0 -2, 1);

  \draw (-1 -2, 1) .. controls (-1 -2, 1.3) and (0 -2, 1.3) .. (0 -2, 1) .. controls (0 -2, 0.7) and (-1 -2, 0.7) .. (-1 -2, 1);

  \end{scope}

  \draw (-11, 1) .. controls (-11.1, 0.2) and (-11.75, -0.3) .. (-11.3, -1) .. controls (-10.8, -2.0) and (-7.0, -2.8) .. (-4.5, -2.8) .. controls (-2.0, -2.8) and (1.8, -2.0) .. (2.3, -1) .. controls (2.75, -0.3) and (2.1, 0.2) .. (2, 1);

  \draw[purple] (-1.4, 0) .. controls (-1.75, -0.45) and (-1.7, -2) .. (-1, -2.42);

  \draw[purple, dashed] (-1.4, 0) .. controls (-0.7, -0.45) and (-0.75, -2) .. (-1, -2.42);

  \draw[purple] (-1, -2.42) node[below] {$C$};

  \draw (0.3, -1.5) node {$P$};

  \draw (-6, -1.5) node{$Y$};

\end{tikzpicture}
\caption{The surface $\Sigma$} \label{fig:1Lip}
\end{figure}

Extend $\partial \Sigma \cup\partial P$ to a  pants decomposition of $\Sigma$, i.e. a maximal collection of disjoint, non-parallel, and homotopically non-trivial simple closed curves. 
 Recall that this pants decomposition gives rise to Fenchel--Nielsen coordinates for the space of all marked hyperbolic structures with geodesic boundary on $\Sigma$. 

Fix $\delta \in (0, 1)$. 
We begin with a compact hyperbolic structure $Y$ on $\overline{\Sigma  -  P}$  with geodesic boundary so that if $\rho_0 : \pi_1(\overline{\Sigma - P}) \to \PO(2, 1)$ is a corresponding convex cocompact representation (i.e. $Y$ is the convex core of $X_{\rho_0}$), then
\begin{equation} \label{eqn:prescribed entropy}
\delta = \lim_{T \to + \infty} \frac{ \log \# \{ [g] \in [\pi_1(\overline{\Sigma - P})] : \ell_{\rho_0}(g) \le T \}}{T}.
\end{equation}
The existence of $\rho_0$ follows from the works of Furusawa \cite[Theorem 2]{Furusawa_dimension} and Patterson \cite[Theorem 4.1]{Patterson_fuchsian}, and Equation \eqref{eqn:Margulis}. Furusawa showed that  any real number in $(0, 1)$ can be realized as the Hausdorff dimension of the limit set of a convex cocompact Fuchsian group associated to any topological type of surface. Patterson's result, together with Equation \eqref{eqn:Margulis}, identifies this Hausdorff dimension with the topological entropy.

We then extend the hyperbolic structure on $Y$ to $\Sigma$ in two different ways and obtain the desired representations. First, we form a hyperbolic structure $Z_1$ 
on $\Sigma$ by attaching a hyperbolic structure on $P$ in which each component of  $C$ has the same length as in $Y$, without any twist along $C$. 
This gives a convex cocompact representation $\rho : \pi_1(\Sigma) \to \PO(2, 1)$ whose restriction on $\pi_1(\overline{\Sigma - P})$ is $\rho_0$, so that $Z_1$ 
is the convex core of $X_{\rho}$. Now fix a geodesic arc $\alpha \subset P$ whose endpoints belong to precisely one component of $\partial P \cap \partial \Sigma$. 
We then form a new hyperbolic sructure $Z_2$ on $\Sigma$ by doing a strip deformation along $\alpha$, i.e. removing a strip in $P$ with geodesic boundary 
whose core arc is $\alpha$, and stitching the boundary of the removed strip (see Figure \ref{fig:1Lip}). Then $Z_2$ also gives a convex cocompact representation 
$\sigma : \pi_1(\Sigma) \to \PO(2, 1)$, whose restriction on $\pi_1(\overline{\Sigma - P})$ is $\rho_0$, and $Z_2$ is the convex core of $X_{\sigma}$ as well. 
We further have a 1-Lipschitz map $f_{\rho \sigma} : X_{\rho} \to X_{\sigma}$ which is identity on $Y$ and is in the homotopy class
of $h_{\sigma} \circ h_{\rho}^{-1}$ (cf. \cite[Lemma 3.4]{thurston1998minimal}). Therefore $L_{\rho \sigma} = 1$, showing Item~(1).

To see Item (2), notice that
$$
\delta \le \Ec_{\rho \sigma}(1)
$$
by Equation \eqref{eqn:prescribed entropy} and that $\rho$ and $\sigma$ are restricted to $\rho_0$ on $\pi_1(\overline{\Sigma - P})$. Hence, it suffices to show the reverse inequality. As in the proof of Theorem \ref{general entropy}, there exists a sequence $\{ g_j \in \pi_1(\Sigma)  \}_{j \in \Nb}$ such that
\begin{itemize}
  \item for each $j \in \Nb$,
  \begin{equation} \label{eqn:sequencegj}
\frac{\ell_{\sigma}(g_j)}{\ell_{\rho}(g_j)} \ge 1 - \frac{1}{j}
  \end{equation}
and
\item the sequence $\left\{\mu_{[g_j]} \in \Mc(\rho)  \right\}_{j \in \Nb}$  has the weak limit $\mu \in \Mc(\rho)$ so that
\begin{equation} \label{eqn:ext by hmu}
\Ec_{\rho \sigma}(1) \le h(\mu).
\end{equation}
\end{itemize}

Hence, it suffices to show that $h(\mu) \le \delta$. 
One may observe, just as 
in the proof of \cite[Proposition 3.1(1)]{DGK_Margulis}, that there exists $\theta>0$  so that each time
the geodesic representative 
$g_j^*$ of  $[g_j]$ in $Z_1$ intersects $\alpha$, the angle of intersection lies in $(\theta, \pi - \theta)$.
It then follows from  \cite[Lemma 2.2]{DGK_Margulis} that there exists $w > 0$ so that
\begin{equation} \label{eqn:intersection}
\ell_{\sigma}(g_j) \le \ell_{\rho}(g_j) - w \cdot i (g_j , \alpha) \quad \text{for all } j \in \Nb
\end{equation}
where $i(g_j,\alpha)$   is the geometric intersection number, i.e. the minimal number of intersection points of a curve in the free homotopy class determined by $g_j$ with
the arc $\alpha$.

If $D\Sigma$ denotes the double of $\Sigma$, then  $D\Sigma$ is a closed orientable
surface of genus at least two. We may identify $\Sigma$ with one of the components
of $\Sigma$ in $D\Sigma$. Then the inclusion of $\Sigma$ into $D\Sigma$ induces an inclusion of $\partial\pi_1(\Sigma)$
into $\partial\pi_1(D\Sigma)$. This inclusion gives rise to an embedding
$$\psi:\Cc(\Sigma)\to \Cc(D\Sigma).$$ 

The  geometric intersection number of pairs  of closed curves on $\Sigma$ continuously extends to 
$\Cc(\Sigma)\times\Cc(\Sigma)$
by letting 
$$i(\nu,\eta):=i(\psi(\nu),\psi(\eta)).$$
(Recall that Bonahon \cite{bonahon} proved that geometric  intersection number extends continuously on $\Cc(D\Sigma)$.)
Notice that the double $D\alpha$ of $\alpha$ is a homotopically non-trivial closed cuve in $D\Sigma$ and we can define
$$i(\eta,\alpha):=i(\psi(\eta),\nu_{[D\alpha]})\quad\text{for all } \eta\in\Cc(\Sigma).$$
Notice that this is a continuous  function such that $i(\nu_{[g]},\alpha)$ is the geometric intersection number $i(g,\alpha)$ for all non-trivial
$g\in\pi_1(\Sigma)$.

Since $\phi_\rho(\mu_{[g_j]})=\frac{\nu_{[g_j]}}{\ell_\rho(g_j)}$ for all $j \in \Nb$, we see that
$\nu:=\phi_\rho(\mu)$ is a weak limit of  $\left\{\frac{\nu_{[g_j]}}{\ell_\rho(g_j)}\right\}_{j \in \Nb}$ in $\Cc(\Sigma)$.  
Since $i(\cdot,\alpha)$ is a continuous linear  function on $\Cc(\Sigma)$,
Equations \eqref{eqn:sequencegj} and \eqref{eqn:intersection} imply that
$$
i(\nu, \alpha) = 0.
$$
Since every complete geodesic in $Z_1$ is either contained in $Y \cup \partial \Sigma$ or intersects $\alpha$, 
this implies that, under the basepoint projection $T^1 X_{\rho} \to X_{\rho}$, $\supp \mu$ projects into $Y \cup \partial \Sigma$. 
Notice also that the topological entropy of the geodesic flow on a closed geodesic is zero. 
Hence, by the Variational principle (Theorem \ref{thm:variational classic}), $h(\mu)$ is bounded from above by the topological entropy of the geodesic flow on $Y$. 
By the choice of $Y$ in Equation~\eqref{eqn:prescribed entropy} and by Equation~\eqref{eqn:Margulis}, 
$$
h(\mu) \le \delta.
$$
Together with Equation \eqref{eqn:ext by hmu}, Item (2) follows.

Now set $\Gamma := (\rho \times \sigma)(\pi_1(\Sigma)) \subset \PO(2, 1) \times \PO(2, 1)$. It follows from Item (1) that $\partial_{\max} = \{ (t, t) : t >  0 \}$. 
Since $\psi_{\Gamma}$ is positively homogeneous of degree one, it suffices to show $\psi_{\Gamma}(1, 1) = \delta$. 
Since both $\rho$ and $\sigma$ are restricted to $\rho_0$ on the subgroup $\pi_1(\overline{\Sigma - P}) \subset \pi_1(\Sigma)$, we have $\delta \le \psi_{\Gamma}(1, 1)$. 
The reverse inequality follows from Item (2)  and \cite[Theorem 8.1]{chow2023jordan}. This establishes Item~(3), completing the proof.
\qed

\subsection{Results in $\mathsf{PO}(3,1)$}

We now consider quasifuchsian representations of a closed surface group and their product representation. We prove Proposition \ref{QF examples} which we restate below.

\bigskip\noindent
{\bf Proposition \ref{QF examples}.} {\em Let $S$ be a closed surface of genus at least two.
\begin{enumerate}
\item 
There exists a Fuchsian representation $\rho_1:\pi_1(S)\to\mathsf{PO}(2,1)\subset \mathsf{PO}(3,1)$ and a quasifuchsian, but not Fuchsian,
representation $\sigma_1:\pi_1(S)\to \mathsf{PO}(3,1)$, so that for $\Gamma_1 := (\rho_1 \times \sigma_1)(\pi_1(S))$,
$$
\partial_{\max} \text{ has slope } 1 \quad \text{and} \quad \psi_{\Gamma_1} > 0 \text{ on } \partial_{\max} .
$$

\item

There exist non-conjugate quasifuchsian, but not Fuchsian, representations $\rho_2, \sigma_2 : \pi_1(S) \to \PO(3, 1)$, so that for $\Gamma_2 := (\rho_2 \times \sigma_2)(\pi_1(S))$,
$$
\partial_{\max} \text{ has slope } 1 \quad \text{and} \quad \psi_{\Gamma_2} > 0 \text{ on } \partial_{\max}.
$$

\end{enumerate}
}

\bigskip\noindent
{\em Proof of Proposition \ref{QF examples}:} Let $\sigma_0:\pi_1(S)\to \mathsf{PO}(2,1)\subset\mathsf{PO}(3,1)$ be a Fuchsian representation. 
Let $\alpha$ be a non-separating simple closed geodesic on $X_{\sigma_0}$ and 
let $\{\sigma_t : \pi_1(S) \to \PO(3, 1) \}_{t\in\mathbb R}$ be the bending deformation 
of $\sigma_0$ along $\alpha$.
Results of Bridgeman--Canary--Sambarino \cite[Theorem 1.3]{BCS_bending} 
imply that there exists $\delta >0$ so that if $0\le s<t\le\delta$, then
$$\ell_{\sigma_t}(g)\le \ell_{\sigma_s}(g)$$
for all non-trivial $g\in \pi_1(S)$ with equality if and only if the closed geodesic on 
$X_{\sigma_0}$ in the free homotopy class of $g$ is disjoint from $\alpha$.
Therefore, $L_{\sigma_s\sigma_t}=1$ and $L_{\sigma_t\sigma_s}>1$. One again easily sees that if $\Gamma_{st} := (\sigma_s\times\sigma_t)(\pi_1(S))$,
then 
$
\psi_{\Gamma_{st}}(1,1)$ is bounded from below by the topological entropy of the geodesic flow on $X_{\sigma_0} - \alpha$. In particular, $\psi_{\Gamma_{st}}(1, 1) > 0$, and hence choosing appropriate $s$ and $t$ for Item (1) and Item (2) finishes the proof.
\qed

\end{document}